\theoremstyle{definition}
\newtheorem{Teo}{Theorem}[section]
\newtheorem{Prop}[Teo]{Proposition}
\newtheorem{Coro}[Teo]{Corollary}
\newtheorem{Obs}[Teo]{Remark}
\newtheorem{Lema}[Teo]{Lemma}
\newtheorem{Def}[Teo]{Definition}
\newtheorem{Ej}[Teo]{Example}
\date{}
\begin{document}
\title{Constructible sets in lattice-valued models: A negative result.}

\author{Jose Moncayo\thanks{jrmoncayov@unal.edu.co} }
\author{Pedro H. Zambrano\thanks{phzambranor@unal.edu.co}}
\affil{Departamento de Matem\'aticas, Universidad Nacional de Colombia, AK 30 $\#$ 45-03 c\'odigo postal 111321, Bogota, Colombia.}
%\email[1]{jrmoncayov@unal.edu.co}
\date{\today}
\maketitle

\begin{abstract}
    We investigate different set-theoretic constructions in Residuated Logic based on Fitting's work on Intuitionistic Set Theory (\cite{Fitting1969}).

    We start by stating some results concerning constructible sets within valued models of Set Theory. We present two distinct constructions of the constructible universe: $\mathfrak{L}^{\mathbb{Q}}$ and $ \mathbb{L}^{\mathbb{Q}}$, and show that they are isomorphic to $V$ (the classical von Neumann universe) and $L$ (the classical Gödel's constructible universe), respectively.
    
    Even though lattice-valued models are the natural way to study non-classical Set Theory (e.g., Intuitionistic, Residuated, Paraconsistent Set Theory, see \cite{Grayson1979, Lano1992a, VenturiMartinez2021,CarnielliConiglio2019}), our results prove that the use of lattice-valued models is not suitable to study the notion of constructibility in logics weaker than classical logic.

\textbf{\small Keywords: Valued models, abstract logics, residuated lattices, constructible sets}\\
\end{abstract}

\section{Introduction}

The notion of constructibility (in Set Theory) started with Gödel's work \cite{Godel1938} on the consistency of the Axiom of Choice ($AC$) and the Generalized Continuum Hypothesis ($GCH$). Gödel considers the class of \textbf{definable} sets in \textbf{Classical first-order Logic} in the language of Set Theory, now called Gödel's constructible universe. 

At the beginning, Gödel's idea of considering definable sets in some logic was not widely used in the construction of new inner models, but rather, using different set-theoretical techniques, new inner models such as $HOD$ or $L[\mathcal{U}]$ were defined that allowed the advancement of Set Theory, especially in the realm of independence results. Nonetheless, a couple of attempts were made to generalize Gödel's idea of a class of definable sets: Scott and Myhill \cite{Scott1971} showed that the well-known model $HOD$ can be obtained by using the definable sets in \textbf{second-order logic} and Chang \cite{Chang1971} showed that if one considers the definable sets in the \textbf{infinitary logic} $L_{\omega_1,\omega_1} $, an inner model is obtained that is characterized by being the smallest inner model that is closed under countable sequences. Although these results are interesting, no meaningful study of inner models arising from different logics was considered for a very long time.

It was not until the work of Kennedy, Magidor and Väänänen \cite{KMV2016} that inner models of Set Theory that arise when considering definable sets in generalized logics were systematically studied. The logics considered were \textbf{strengthenings} of first-order logic constructed by using generalized quantifiers or by allowing infinite disjunctions, conjunctions, or quantification. Some notable examples include the Stationary Set Theory, logics with cofinality quantifiers, the Härtig quantifier and the Magidor-Malitz quantifier. Such models made it possible to study new independence results in Set Theory.

Therefore, one could ask if such constructions could be done in logics that are \textbf{weakenings} (rather than strengthenings, as we just saw) of Classical first-order Logic. We would like to study logics general enough to capture the most important logical examples, such as \textbf{Intuitionistic} and \textbf{Fuzzy Logic}, but not so general that we lose too many structural rules, such as the commutativity of the premises in a deduction. Therefore, we are interested in studying constructibility in the context of the so called \textbf{substructural logics} without contraction (but with the exchange rule). Essentially, we consider a weakening of Intuitionistic Logic in which we consider two types of conjunctions: $\&$ and $\land$. The strong conjunction (denoted $\&$) is no longer idempotent, that is, 

\begin{center}
    $\alpha\rightarrow(\alpha\&\alpha)$
\end{center}
no longer holds for all formulas $\alpha$. The defining feature of this connective is that is the left adjoint to the implication (just as $\land$ is for the classical and intuitionistic case),
\begin{center}
    $\alpha\&\beta\Rightarrow \gamma$ if and only if $\alpha\Rightarrow \beta\rightarrow \gamma$.
\end{center}

We also consider a weak conjunction (denoted $\land$) closer to the intuitionistic one, but that is not necessarily the adjoint to the implication. 

Therefore, in this logic, called \textbf{Residuated Logic} (also \textbf{Monoidal Logic} in \cite{Hohle1994}), it is the case that the strength of the premises changes depending on how many of the same hypothesis we have (due to the lack of idempotency), such as it can be seen in the Deduction Theorem for Residuated Propositional Logic ($RPC$):

\begin{Teo}[Deduction theorem for $RPC$, \cite{MacCaull1996}]
    If there is a deduction in $RPC$ of $\theta$ from the set of formulas $\alpha, \beta, ..., \gamma, \delta$, and the deduction used $\delta$ $n$-times, then there is a deduction in $RPC$ of $(\delta^n\rightarrow \theta)$ from $\alpha, \beta, ..., \gamma$ (where $\delta^n=\delta \& \delta \& ... \& \delta$ is the $n$-fold conjunction of $\delta$ with itself).
\end{Teo}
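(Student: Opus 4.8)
The plan is to argue by induction on the structure (equivalently, the length) of a fixed deduction of $\theta$ from $\Gamma \cup \{\delta\}$, where $\Gamma = \{\alpha, \beta, \dots, \gamma\}$, keeping track at every stage of the number of times the distinguished hypothesis $\delta$ is invoked. The subtle point, absent from the classical deduction theorem, is that without contraction each appeal to $\delta$ must be paid for separately, so the bookkeeping of multiplicities has to be threaded through the entire induction; this is precisely what forces the $n$-fold conjunction $\delta^n$ rather than a single $\delta$ in the conclusion.

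For the base cases I would treat a line $\theta$ that is either an axiom of $RPC$ or a member of $\Gamma$: here $\delta$ is used $0$ times, and using the convention that $\delta^0$ is the multiplicative unit $\mathbf{1}$ (with $\mathbf{1}\to\theta$ provably equivalent to $\theta$) one obtains a deduction of $\delta^0\to\theta$ from $\Gamma$ directly. The remaining base case is $\theta=\delta$, used once; here the required $\delta^1\to\theta$ is just the identity theorem $\delta\to\delta$.

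The heart of the argument is the inductive step for modus ponens. Suppose $\theta$ is obtained from earlier lines $\psi$ and $\psi\to\theta$, and that $\delta$ is used $k$ times in the sub-deduction of $\psi$ and $m$ times in that of $\psi\to\theta$, so $k+m=n$. By the induction hypothesis there are deductions from $\Gamma$ of $\delta^k\to\psi$ and of $\delta^m\to(\psi\to\theta)$. I would then combine these using three facts provable in $RPC$: (i) monotonicity of the strong conjunction, namely from $a\to b$ and $c\to d$ one derives $(a\mathbin{\&}c)\to(b\mathbin{\&}d)$; (ii) the evaluation theorem $\big(\psi\mathbin{\&}(\psi\to\theta)\big)\to\theta$, which is exactly the residuation $(\psi\to\theta)\mathbin{\&}\psi\Rightarrow\theta$ read off from the identity on $\psi\to\theta$; and (iii) associativity and commutativity of $\&$ (the latter from the exchange rule), giving $\delta^k\mathbin{\&}\delta^m$ provably equivalent to $\delta^{k+m}=\delta^n$. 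Chaining these with transitivity of $\to$ yields a deduction of $\delta^n\to\theta$ from $\Gamma$. Any other primitive rule of $RPC$ (for instance adjunction for $\&$, or the rules governing $\land$) is dispatched by the same template, combining the multiplicities additively through $\&$.

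The main obstacle I anticipate is not any single calculation but the correct formalization of the phrase ``the deduction used $\delta$ $n$-times'' so that the additive splitting $k+m=n$ at each modus ponens is faithful to the contraction-free proof system; this is cleanest if the deduction is presented as a tree whose leaves carry hypotheses, with $n$ the number of leaves labelled $\delta$. Once that notion is fixed, the only genuinely non-classical ingredient is the monotonicity/congruence lemma for $\&$ together with the residuation law, which replace the classical use of the combinators $\mathbf{K}$ and $\mathbf{S}$ and which I would establish beforehand as auxiliary lemmas about provable implications in $RPC$.
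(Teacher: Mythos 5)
Your proposal is correct. Note that the paper itself states this theorem only as a cited background result from MacCaull's note and gives no proof, so there is nothing internal to compare against; your argument --- induction on a tree-presented deduction with leaf-counting of $\delta$, handling modus ponens by combining $\delta^k\rightarrow\psi$ and $\delta^m\rightarrow(\psi\rightarrow\theta)$ via monotonicity of $\&$, the residuation law $(\psi\,\&\,(\psi\rightarrow\theta))\rightarrow\theta$, and associativity/commutativity of $\&$ --- is exactly the standard proof for contraction-free systems and is the one the cited source uses. Your flagged subtlety (that the multiplicity count only makes sense if deductions are trees whose leaves carry hypothesis occurrences, so that reusing a derived line does not smuggle in contraction) is indeed the one point that must be fixed precisely, and your convention $\delta^0=\top$ with $\top\rightarrow\theta$ equivalent to $\theta$ is consistent with the integrality of the underlying Quantale assumed throughout the paper.
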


Also in regards to the equality, we have that the usual substitution of equal elements

\begin{center}
    $(x=y)\& \theta(x)\Rightarrow \theta(y)$
\end{center}

is not going to hold in general, but rather, we have that

\begin{center}
    $(x=y)^n\& \theta(x)\Rightarrow \theta(y)$
\end{center}

where $n$ occurrences of $x$ in $\theta(x)$ that have been replaced by $y$ to form $\theta(y)$.

With these key features in mind, one could ask what kind of models are we going to use to study these logics, and more specifically, how do we can find natural models of Set Theory for these logics. A natural way to do this is considering \textbf{lattice-valued models}.%We do this in two ways: first by considering \textbf{lattice-valued models} and then using \textbf{Kripke-like models}.

Lattice-valued models were first introduced by Scott and Solovay in \cite{SS1967}. They considered \textbf{Boolean-valued models of Set Theory} in order to provide a more intuitive presentation of \textbf{Cohen's forcing}. To achieve this, they took a complete Boolean algebra $\mathbb{B}$ and built a ``model'' of Set Theory $ V^{\mathbb{B}}$ in which the truth values of formulas take values in $\mathbb{B}$ instead of the trivial Boolean algebra $\{0,1\}$. 

Based on the construction of Scott and Solovay, several generalizations of the previous construction have been considered by taking other lattices instead of Boolean algebras. For example, Heyting lattices give rise to \textbf{Intuitionist models of Set Theory} \cite {Grayson1979}, $BL\Delta$-algebras give rise to \textbf{models of Fuzzy Set Theory} in the Fuzzy Logic $BL\forall\Delta$ \cite{HajekHanikova2001, HajekHanikova2003}, topological complete residuated lattices (i.e. topological commutative integral Quantales) give rise to \textbf{Modal models of Residuated Set Theory} \cite {Lano1992a} and . These kind of valued models serve as natural models of logics weaker than first-order.

Until now, as far as the authors are aware, there has been no in-depth study of what would be a ``class of definable sets'' in the context of valued models. The closest attempt to this was done by Fitting \cite{Fitting1969}, where it was shown how to construct $L$ (or more precisely a model ``isomorphic'' to $L$) using two-valued characteristic functions that are definable by some formula. This construction was introduced as a motivation for his definition of the class of constructible sets using Kripke models.

Following Fitting's idea, we propose new definitions of the notion of \textbf{definable subset} within a Boolean-valued model of Set Theory and with them, we propose two new constructions of the constructible universe: $\mathfrak{L}^{\mathbb{Q}}$ and $ \mathbb{L}^{\mathbb{Q}}$. Moreover, we prove that these models are in fact \textbf{two-valued}, since our definition of definability is too restrictive and forces the models to only take these values. Furthermore, we prove that $\mathfrak{L}^{\mathbb{Q}}$ and $ \mathbb{L}^{\mathbb{Q}}$ are ``isomorphic'' to $V$ (the classical von Neumann universe) and $L$ (the classical Gödel's constructible universe), respectively. 

When we tried to generalize these notions of definability to the context of Quantale-valued models, we found that the resulting classes of constructible sets are also \textbf{two valued} (citar), and therefore are not suitable to study constructibility within Residuated Logic. 

We start by introducing the algebraic structures 
notion of \textit{Quantale}

In Section \ref{completeresiduatedlattice}, we provide some basics on commutative integral Quantales, which is the underlying lattice behind the Residuated Logic.% (which corresponds to Complete Residuated lattices).

In Section \ref{residuatedlogic}, we mention some generalities about Residuated Logic.

In Section \ref{valuedmodelsofsetheory}, we introduce the notion of Quantale-valued models of Set Theory in a similar way as in \cite{SS1967,Grayson1979,Lano1992a,VenturiMartinez2021} and discuss their relationship with several logics (Classical, Intuitionistic and Residuated Logic). In Subsection \ref{constructiblesetsonvaluedmodels}, we propose two definitions for the class of constructible sets in the context of Quantale-valued models of Set Theory, $\mathfrak{L}^{\mathbb{Q}}$ and $ \mathbb{L}^{\mathbb{Q}}$. We show that both $\mathbb{Q}$-valued models are in fact two-valued. Furthermore, we prove that $\mathfrak{L}^{\mathbb{Q}}$ and $ \mathbb{L}^{\mathbb{Q}}$ are ``isomorphic'' to the classical $V$ and $L$, respectively.

These results lead us to consider other kind of models (specifically, Kripke-like models) to study constructability in this approach (see~\cite{Moncayo2023}), which is done in the forthcoming paper \cite{MoncayoZambrano2X}.

\section{Commutative integral Quantales}\label{completeresiduatedlattice}

Structures like Quantales (i.e. ordered monoids with a product that distributes over arbitrary supremums) have been studied at least since Ward and Dilworth's work on residuated lattices \cite{WardDilworth1938, Dilworth1939, Ward1938}, where their motivations were more algebraic, since they were studying the lattice of ideals in a ring: Given a ring $R$, the set of ideals of $R$, denoted as $Id(R)$, forms a complete lattice defining infimum and supremum as the intersection and sum of ideals, respectively. The monoid operation $\cdot$ on this lattice is given by multiplication of ideals, and the element $R$ in $Id(R)$ is the identity of this operation.

 But it was not until the work of Mulvey \cite{Mulvey1986}, where the term \textbf{Quantale} was coined as a combination of \textbf{``quantum''} and \textbf{``locale''} and proposed their use for studying \textbf{Quantum Logic} and non-commutative $C^*$-algebras.

 Our motivation for the study of Quantales is somewhat different. We are not interested in Quantales that are non-commutative - as was the case for Mulvey - but rather Quantales that are \textbf{not necessary idempotent}. We are interested in studying Quantales since they semantically capture both Intuitionistic and Fuzzy Logic, so we will focus on the study of \textbf{commutative integral Quantales}. This kind of structures is widely use in the field of \textbf{substructural logics} as semantical counterparts for those logics, particularly Residuated Logic.

\begin{Def}[Commutative integral Quantales]\label{residuatedlattice}

We say that $\mathbb{Q}=(\mathbb{Q},\land,\lor,\cdot,\rightarrow, 1, 0)$ is a \textit{commutative integral Quantales} (or equivalently a \textit{complete residuated lattice}) if:

\begin{enumerate}
    \item $(\mathbb{Q},\land,\lor, 1, 0)$ is a complete bounded lattice.
     \item $(\mathbb{Q}, \cdot, 1)$ is a commutative monoid.
     \item For all $x, y_i\in \mathbb{Q}$ with $i\in I$,
    \begin{center}
        $x\cdot\bigvee\limits_{i\in I}y_i=\bigvee\limits_{i\in I}(x\cdot y_i)$  
    \end{center}
    and $\rightarrow$ can be defined as $x\rightarrow y:=\bigvee\{z\in\mathbb{Q}: x\cdot z\leq y\}$ 
\end{enumerate}
\end{Def}

\begin{Def}
    We say that $\mathbb{Q}$ is \textit{idempotent} if $x\cdot x=x$ for all $x\in\mathbb{Q}$.
\end{Def}

We now introduce the notion of $t$-norms, which are a key example, since they are a fundamental operation in the context of fuzzy logics. Here $[0, 1]$ denotes the subset of real numbers between 0 and 1.

\begin{Def}\label{tnorm}
        A function $\cdot: [0,1]^2\rightarrow [0,1]$ is called \textit{$t$-norm} if for all $x, y, a, b\in [0,1]$:
        \begin{enumerate}
            \item Commutativity: $x\cdot y=y\cdot x$.
            \item Associativity: $(x \cdot y) \cdot z = x \cdot (y \cdot z)$.
            \item Monotonicity: If $x\leq a$ and $y\leq b$, then $x\cdot y\leq a\cdot b$.
            \item Identity: $x\cdot 1= x$.
        \end{enumerate}
\end{Def}

\begin{Def}\label{continuoustnorms}
    Let $\cdot: [0,1]^2\rightarrow [0,1]$ be a $t$-norm. Then, $\cdot$ is said to be
    \begin{enumerate}
        \item \textit{left continuous}, if it is left continuous as a function from $[0,1]^2$ to $[0,1]$ with the usual metric.
        \item \textit{continuous}, if it is continuous as a function from $[0,1]^2$ to $[0,1]$ with the usual metric.
    \end{enumerate}
\end{Def}

\begin{Ej}\label{examplesleftcontinoustnorms}
    The following operations are left continuous $t$-norms:
    \begin{enumerate}
        \item The Łukasiewicz $t$-norm: $x\cdot_L y=max\{x+y-1, 0\}$.
        \item The product $t$-norm: $x\cdot_p y=x\cdot y$, where $\cdot$ denotes the usual product on $\mathbb{R}$.
        \item The Gödel-Dummett $t$-norm: $x\cdot_{GD} y=min\{x, y\}$.
    \end{enumerate}
\end{Ej}

\begin{Ej}The following structures are commutative integral Quantales:
    \begin{enumerate}
        \item Boolean algebras.
        \item Heyting algebras.
        \item The order $([0, 1], \leq)$ endowed with the $t$-norm of Łukasiewicz, Gödel-Dummett or the product $t$-norm.
        \item More generally, every structure $([0, 1], \leq \land,\lor,\cdot, 0, 1)$, where $\leq$ is the usual order and $\cdot$ is any left continuous t-norm.  
\end{enumerate}
    
\end{Ej}

As we mentioned before, we focus on integral commutative Quantales, since these structures naturally generalize both to Heyting algebra (Intuitionistic Logic) and $[0, 1]$ endowed with some left continuous $t$-norm (Fuzzy Logics).

\begin{Teo}[\cite{BuPi2014}, p. 2]\label{cuantales1}
    Let $\mathbb{Q}$ be a commutative integral Quantale and $x, y, z \in \mathbb{Q}$ with $i\in I$. Then:
    \begin{enumerate}
        \item $x\leq y$ if and only if $(x\rightarrow y)=1$.
        \item $x\cdot(x\rightarrow y)\leq y$.
        \item $(1\rightarrow y)=y$
        \item $0=x\cdot 0=0\cdot x$.
        \item $(0\rightarrow y)=1$ 
        \item If $x\leq y$, then $x\cdot z\leq y\cdot z$.
        \item  $x\cdot y\leq x\land y$.
        \item If $x\leq y$, then $y\rightarrow z\leq x\rightarrow z$.
        \item If $x\leq y$, then $z\rightarrow x\leq z\rightarrow y$.
        \item $(x\cdot y)\rightarrow z= x\rightarrow(y\rightarrow z)$.
    \end{enumerate}
\end{Teo}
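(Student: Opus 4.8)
The plan is to first isolate the single structural fact that drives every item, namely the residuation adjunction
\[
x\cdot z\leq y \iff z\leq (x\rightarrow y),
\]
and then to derive all ten properties as formal consequences of this adjunction together with the monoid axioms and the bounded-lattice structure. To establish the adjunction from the given definition $x\rightarrow y:=\bigvee\{z:x\cdot z\leq y\}$, I would first record that $\cdot$ is monotone: if $x\leq y$ then $y=x\lor y$, so by the distributivity axiom $y\cdot z=(x\lor y)\cdot z=(x\cdot z)\lor(y\cdot z)\geq x\cdot z$ (using commutativity to move $z$ to the appropriate side); this is exactly item 6, and I would prove it at the outset. The forward direction of the adjunction is immediate, since $x\cdot z\leq y$ places $z$ into the defining set, whence $z\leq\bigvee\{z:x\cdot z\leq y\}=(x\rightarrow y)$. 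For the converse, the distributivity axiom gives $x\cdot(x\rightarrow y)=x\cdot\bigvee\{z:x\cdot z\leq y\}=\bigvee\{x\cdot z:x\cdot z\leq y\}\leq y$, which is item 2; combined with monotonicity, $z\leq(x\rightarrow y)$ yields $x\cdot z\leq x\cdot(x\rightarrow y)\leq y$.

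With the adjunction and items 2 and 6 in hand, the remaining items fall out mechanically. For item 1, note that $(x\rightarrow y)=1$ iff $1\leq(x\rightarrow y)$, which by the adjunction is equivalent to $x\cdot 1\leq y$, i.e. $x\leq y$. For item 3, the adjunction gives $z\leq(1\rightarrow y)\iff z\leq y$ for every $z$, forcing $(1\rightarrow y)=y$ by antisymmetry. Item 4 follows from reading $0$ as the empty supremum $\bigvee\emptyset$ and applying distributivity, so that $x\cdot 0=\bigvee\emptyset=0$, with commutativity handling $0\cdot x$. Item 5 is then item 1 applied to the always-true inequality $0\leq y$. Item 7 uses integrality: since $y\leq 1$, monotonicity gives $x\cdot y\leq x\cdot 1=x$, and symmetrically $x\cdot y\leq y$, so $x\cdot y$ is a lower bound of $\{x,y\}$ and hence $x\cdot y\leq x\land y$.

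The two monotonicity properties of the arrow are the adjunction read backwards through item 2. For item 8, if $x\leq y$ then $x\cdot(y\rightarrow z)\leq y\cdot(y\rightarrow z)\leq z$ by items 6 and 2, so the adjunction yields $(y\rightarrow z)\leq(x\rightarrow z)$; item 9 is analogous, using $z\cdot(z\rightarrow x)\leq x\leq y$. Finally, item 10 is a double application of the adjunction: for every $w$,
\[
w\leq x\rightarrow(y\rightarrow z)\iff x\cdot w\leq(y\rightarrow z)\iff y\cdot(x\cdot w)\leq z\iff (x\cdot y)\cdot w\leq z\iff w\leq (x\cdot y)\rightarrow z,
\]
where the middle steps use associativity and commutativity of $\cdot$; antisymmetry then gives the claimed equality. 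I do not expect a genuine obstacle here: the only delicate point is justifying the adjunction and item 2 directly from the supremum definition rather than assuming residuation as a primitive, and making sure the distributivity axiom is explicitly invoked both for the counit $x\cdot(x\rightarrow y)\leq y$ and for the degenerate empty-supremum case in item 4. Once that foundational step is secured, the remainder is bookkeeping.
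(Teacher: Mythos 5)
Your proof is correct. Note that the paper itself gives no proof of Theorem~\ref{cuantales1}; it is quoted from Bu\c{s}neag--Piciu, so there is no in-text argument to compare against. Your strategy --- first extracting the residuation adjunction $x\cdot z\leq y \iff z\leq (x\rightarrow y)$ from the supremum definition of $\rightarrow$ via the distributivity axiom, then reading off all ten items --- is the standard derivation, and every step checks out: the forward direction of the adjunction is membership in the defining set, the converse is item~2 (the counit) plus monotonicity of $\cdot$ (item~6, correctly reduced to binary distributivity and commutativity), and items~1, 3, 5, 8, 9, 10 are routine applications of the adjunction together with integrality ($1$ is both the unit and the top). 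The only point worth flagging is item~4: your argument reads $0$ as $\bigvee\emptyset$ and invokes distributivity over the empty index set, which is fine if the axiom is read with $I=\emptyset$ allowed, but some presentations state it only for nonempty families. You can sidestep this entirely: once item~6 is in hand, integrality gives $x\cdot 0\leq 1\cdot 0=0$ directly from the unit law, so $x\cdot 0=0$ without any convention about empty suprema. With that small substitution the proof is airtight.
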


\begin{Def}\label{otheroperationsonQ}
    Let $\mathbb{Q}$ be a commutative integral Quantale and $x, y\in \mathbb{Q}$. We define
    \begin{enumerate}
        \item $\sim x:=x\rightarrow 0$ (negation),
        \item $x\equiv y:= (x\rightarrow y)\cdot(y\rightarrow x)$ (equivalence),
        \item $x^0=1$ and $x^{n+1}=x\cdot x^n$ for $x\in\mathbb{N}$ (exponentiation).
    \end{enumerate}
\end{Def}

%\begin{Teo}[cf. \cite{RasiowaSikorski1963}, Chapter IV., 7.2]\label{cuantales1.5}
%    Let $\mathbb{Q}$ be a commutative integral Quantale and $x, y, y_i, x_i \in \mathbb{Q}$ with $i\in I$. Then:
%    \begin{enumerate}
%        \item $x\cdot \left(\bigwedge\limits_{i\in I} y_i\right)\leq \bigwedge\limits_{i\in I}(x\cdot y_i)$
%        \item $x\rightarrow \left(\bigwedge\limits_{i\in I} y_i\right)= \bigwedge\limits_{i\in I}(x\rightarrow y_i)$.
%        \item $\left(\bigvee\limits_{i\in I} x_i\right)\rightarrow y=\bigwedge\limits_{i\in I}(x_i\rightarrow y)$.
%        \item $\sim\left(\bigvee\limits_{i\in I} x_i\right)=\bigwedge\limits_{i\in I}(\sim x_i)$.
%    \end{enumerate}
%\end{Teo}

\begin{Teo}[\cite{BuPi2014}, p. 2]\label{cuantales2}
    Let $\mathbb{Q}=(\mathbb{Q},\land,\lor,\cdot,\rightarrow, 1, 0)$ be a commutative integral Quantale and let $x, y, y_i, x_i \in \mathbb{Q}$ for $i\in I$. Then:
    \begin{enumerate}
        \item $x\cdot (\sim x)=0$, but in general it is not true that $x\lor \sim x=1$.
         \item $x\leq (\sim \sim x)$, but in general it is not true that $\sim \sim x\leq x$.
         \item $\sim (x\lor y)=(\sim x)\cdot (\sim y)$ (De Morgan's Law), but it is not generally true that $\sim (x\cdot y)=(\sim x)\lor (\sim y)$.
         \item If $x\leq y$, then $(\sim y)\leq (\sim x)$ and $(\sim\sim x)\leq (\sim\sim y)$.
         \item $\sim 0=1$ and $\sim 1=0$.
        \item $x=y$ if and only if $(x\equiv y)=1$.
        \item $(\sim\sim x)\cdot (\sim\sim y)\leq (\sim\sim(x\cdot y))$.
        \item $\sim\sim\sim x=\sim x$.
    \end{enumerate}
\end{Teo}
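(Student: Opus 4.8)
The plan is to prove each of the eight items in Theorem~\ref{cuantales2} by reducing them to the basic residuation facts already established in Theorem~\ref{cuantales1}, together with the definitions in Definition~\ref{otheroperationsonQ}. The guiding principle throughout is the adjunction $x\cdot z\leq y$ iff $z\leq (x\rightarrow y)$ encoded in items~(1) and~(2) of Theorem~\ref{cuantales1}, which I would invoke repeatedly.

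\medskip

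First I would dispatch the items that follow immediately from residuation. For item~(1), $x\cdot(\sim x)=x\cdot(x\rightarrow 0)\leq 0$ by Theorem~\ref{cuantales1}(2), and since $0$ is the bottom element this forces $x\cdot(\sim x)=0$; the failure of $x\lor\sim x=1$ is exhibited by a counterexample, e.g.\ a nontrivial Heyting algebra or the \L ukasiewicz unit interval at $x=\tfrac12$. For item~(2), the inequality $x\leq\sim\sim x=(x\rightarrow 0)\rightarrow 0$ is exactly the adjunction applied to $x\cdot(x\rightarrow 0)\leq 0$, and the same \L ukasiewicz example at a suitable point shows the reverse can fail. For item~(5), $\sim 0=0\rightarrow 0=1$ follows from Theorem~\ref{cuantales1}(5) (or (1), since $0\leq 0$), and $\sim 1=1\rightarrow 0=0$ from Theorem~\ref{cuantales1}(3). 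Item~(6) is just unwinding the definition of $\equiv$ together with Theorem~\ref{cuantales1}(1): $(x\equiv y)=1$ iff $(x\rightarrow y)\cdot(y\rightarrow x)=1$ iff both factors equal $1$ (as each is $\leq 1$ by integrality and their product is $1$ only if both are), iff $x\leq y$ and $y\leq x$.

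\medskip

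The monotonicity item~(4) splits into two halves: $x\leq y\Rightarrow\sim y\leq\sim x$ is Theorem~\ref{cuantales1}(8) with $z=0$, and applying this antitone map twice gives the isotone statement $\sim\sim x\leq\sim\sim y$. For the De~Morgan law in item~(3), I would prove $\sim(x\lor y)=(\sim x)\cdot(\sim y)$ by antisymmetry. The $\leq$ direction uses that $\sim(x\lor y)\leq\sim x$ and $\sim(x\lor y)\leq\sim y$ from item~(4), but to land in the \emph{product} I would instead argue directly from the adjunction and the distribution of $\cdot$ over $\lor$ (Definition~\ref{residuatedlattice}(3)): showing $(x\lor y)\cdot\big((\sim x)\cdot(\sim y)\big)=0$ via distributivity and item~(1), which by residuation yields $(\sim x)\cdot(\sim y)\leq\sim(x\lor y)$; the reverse inequality then follows similarly, giving equality. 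The failure of the dual law is again a counterexample.

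\medskip

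The two remaining items~(7) and~(8) are where I expect the main obstacle. For item~(7), $(\sim\sim x)\cdot(\sim\sim y)\leq\sim\sim(x\cdot y)$, the difficulty is that double negation does not distribute over $\cdot$ on the nose, so the inequality must be squeezed out of adjunctions. I would aim to show $(x\cdot y)\cdot\big((\sim\sim x)\cdot(\sim\sim y)\big)$ behaves well, or more cleanly use the currying identity Theorem~\ref{cuantales1}(10), $(a\cdot b)\rightarrow c=a\rightarrow(b\rightarrow c)$, to manipulate $\sim\sim(x\cdot y)=\big((x\cdot y)\rightarrow 0\big)\rightarrow 0$ and reduce the claim to repeated application of item~(2) and the adjunction. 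For item~(8), $\sim\sim\sim x=\sim x$, one inequality $\sim x\leq\sim\sim\sim x$ is item~(2) applied to $\sim x$, and the reverse $\sim\sim\sim x\leq\sim x$ comes from applying the antitone map $\sim(\cdot)$ to the inequality $x\leq\sim\sim x$ of item~(2) via item~(4). This last item is the cleanest, so I would present it after~(7) to close the theorem.
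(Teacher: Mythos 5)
The paper does not actually prove Theorem~\ref{cuantales2}: it is quoted from \cite{BuPi2014}, so your proposal can only be judged on its own merits. Most of it is sound. Items (1), (4), (5), (6) and (8) reduce to Theorem~\ref{cuantales1} exactly as you describe, the first half of (2) is the adjunction applied to $x\cdot(x\rightarrow 0)\leq 0$, and your sketch for (7) can indeed be completed by currying: from $\sim(x\cdot y)=x\rightarrow(y\rightarrow 0)$ one gets $x\cdot\sim(x\cdot y)\leq\,\sim y$, hence $(\sim\sim y)\cdot x\cdot\sim(x\cdot y)\leq(\sim\sim y)\cdot(\sim y)=0$, hence $(\sim\sim y)\cdot\sim(x\cdot y)\leq\,\sim x$, and one more multiplication by $\sim\sim x$ followed by residuation gives the claim.

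The genuine gap is in item (3). Your distributivity argument correctly yields $(\sim x)\cdot(\sim y)\leq\,\sim(x\lor y)$, but the reverse inequality does not ``follow similarly''---for the identity as stated, with $\cdot$ on the right-hand side, it is false. From $\sim(x\lor y)\leq\,\sim x$ and $\sim(x\lor y)\leq\,\sim y$ you only obtain $\sim(x\lor y)\leq(\sim x)\land(\sim y)$, and $(\sim x)\cdot(\sim y)$ can sit strictly below that meet: in the Łukasiewicz quantale with $x=y=\tfrac{1}{2}$ one has $\sim(x\lor y)=\tfrac{1}{2}$ while $(\sim x)\cdot(\sim y)=\max(1-x-y,0)=0$. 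The provable De Morgan law (and the one in \cite{BuPi2014}) is $\sim(x\lor y)=(\sim x)\land(\sim y)$, which follows from $(x\lor y)\rightarrow z=(x\rightarrow z)\land(y\rightarrow z)$; the statement as printed appears to contain a typo, and no proof of the $\cdot$ version can exist. A smaller slip: the Łukasiewicz $t$-norm cannot witness the failure of $\sim\sim x\leq x$ in item (2), since its negation is involutive ($\sim\sim x=x$ there); use instead the Gödel--Dummett or product $t$-norm, where $\sim\sim x=1$ for every $x>0$, or a three-element Heyting chain.
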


If we consider a commutative integral Quantale that is also \textbf{ idempotent}, then the structure  collapses to a Heyting algebra.
For this reason, we are interested in studying commutative integral Quantales that are not necessarily idempotent.

\begin{Teo}[Folklore]\label{idempotentisheyting}
    If $\mathbb{Q}$ is a commutative idempotent integral Quantale, then $\mathbb{Q}$ is a Heyting algebra, where $x\cdot y=x\land y$ for all $x, y\in \mathbb{Q}$. 
\end{Teo}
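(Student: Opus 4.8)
The plan is to reduce everything to a single identity, namely $x\cdot y = x\land y$ for all $x,y\in\mathbb{Q}$. Once this is in hand, the residuation adjunction that comes for free in any commutative integral Quantale becomes precisely the defining adjunction of a Heyting implication, so that $\mathbb{Q}$ is a (complete) Heyting algebra with $\rightarrow$ as its relative pseudocomplement.

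First I would establish $x\cdot y = x\land y$. One inequality, $x\cdot y\le x\land y$, is already recorded as item (7) of Theorem~\ref{cuantales1}, so the work lies entirely in the reverse inequality $x\land y\le x\cdot y$. Here is the key step: since $x\land y\le x$ and $x\land y\le y$, two applications of the monotonicity of the product (Theorem~\ref{cuantales1}, item (6), together with commutativity) give
\[
(x\land y)\cdot(x\land y)\le x\cdot y .
\]
Idempotency collapses the left-hand side, $(x\land y)\cdot(x\land y)=x\land y$, whence $x\land y\le x\cdot y$. Combining the two inequalities yields $x\cdot y = x\land y$.

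With the product identified with the meet, I would spell out the adjunction. In any commutative integral Quantale the definition $x\rightarrow y=\bigvee\{z: x\cdot z\le y\}$, together with item (2) of Theorem~\ref{cuantales1} and monotonicity, gives the Galois connection $x\cdot z\le y \iff z\le x\rightarrow y$. Substituting $x\cdot z = x\land z$ turns this into $x\land z\le y \iff z\le x\rightarrow y$, which is exactly the universal property characterizing $x\rightarrow y$ as the relative pseudocomplement of $x$ with respect to $y$. Since $(\mathbb{Q},\land,\lor,1,0)$ is already a complete bounded lattice by hypothesis, and since $x\land(-)$ now possesses a right adjoint (hence preserves arbitrary joins, forcing the infinite distributive law), the structure $(\mathbb{Q},\land,\lor,\rightarrow,1,0)$ satisfies all the axioms of a complete Heyting algebra.

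I do not expect a genuine obstacle: the statement is folklore, and the entire content is the short computation of the second paragraph, in which idempotency is used exactly once. The only point meriting a sentence of care is the lattice distributivity demanded of a Heyting algebra, but this is automatic from the existence of the residual $\rightarrow$, so no separate argument is required.
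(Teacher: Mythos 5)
Your proof is correct. The paper itself labels this theorem as folklore and supplies no proof, so there is nothing to compare against; your argument --- obtaining $x\land y\le x\cdot y$ from $(x\land y)\cdot(x\land y)\le x\cdot y$ via monotonicity and then collapsing the left side by idempotency, followed by reading the Heyting adjunction $x\land z\le y \iff z\le x\rightarrow y$ off the residuation law --- is the standard argument and is complete, including the observation that lattice distributivity comes for free from the adjoint.
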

    
\begin{Teo}\label{cerraduraBooleana}
    Let $\mathbb{Q}=(\mathbb{Q},\land,\lor,\cdot,\rightarrow, 1, 0)$ be a commutative integral Quantale. Then, if $x, y\in \{0, 1\}$, 
    \begin{enumerate}
        \item $x\rightarrow y\in \{0, 1\}$
        \item $x\land y\in \{0, 1\}$
        \item $x\lor y\in \{0, 1\}$
        \item $x\cdot y\in \{0, 1\}$
        \item $\sim x\in \{0,1\}$.
    \end{enumerate}
\end{Teo}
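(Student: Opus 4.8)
The plan is to show that the set $\{0,1\}$ is closed under each of the five operations, which amounts to a finite case check driven by the basic Quantale identities already collected in Theorems \ref{cuantales1} and \ref{cuantales2}. The key observation is that $0$ and $1$ are the bottom and top of the lattice, so most of the work reduces to recalling how the operations behave at the extremes rather than performing genuine computations.

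First I would dispatch $\land$ and $\lor$ immediately: since $0$ and $1$ are the least and greatest elements of the bounded lattice, we have $0\land x=0$, $1\land x=x$, $0\lor x=x$, and $1\lor x=1$ for every $x$, so in particular restricting to $x,y\in\{0,1\}$ keeps the value in $\{0,1\}$. Next I would handle $\cdot$ using Theorem \ref{cuantales1}: by item (4) we have $0\cdot x=x\cdot 0=0$, so any product involving $0$ is $0$, while $1$ is the monoid identity by the definition of a commutative integral Quantale, giving $1\cdot 1=1$. Thus the only four cases $0\cdot 0,\,0\cdot 1,\,1\cdot 0,\,1\cdot 1$ all land in $\{0,1\}$.

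For implication I would again use Theorem \ref{cuantales1}: item (3) gives $1\rightarrow y=y$, so $1\rightarrow 0=0$ and $1\rightarrow 1=1$; item (5) gives $0\rightarrow y=1$, so $0\rightarrow 0=0\rightarrow 1=1$. All four cases therefore lie in $\{0,1\}$, which settles (1). For negation, $\sim x=x\rightarrow 0$ by Definition \ref{otheroperationsonQ}, so it is a special case of implication: $\sim 0=0\rightarrow 0=1$ and $\sim 1=1\rightarrow 0=0$, both in $\{0,1\}$ (this also matches Theorem \ref{cuantales2}, item (5)). This completes all five parts.

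I do not expect a genuine obstacle here, since each operation is pinned down at $0$ and $1$ by the identities already established; the statement is essentially the assertion that the two-element chain $\{0,1\}$ forms a subalgebra of $\mathbb{Q}$ (indeed a Boolean subalgebra), and the only care needed is to make sure every operation is checked rather than assumed. The mild subtlety worth flagging is that $1$ is the monoid identity and the lattice top simultaneously, and $0$ is both the lattice bottom and an absorbing element for $\cdot$; these coincidences are exactly what force the product and implication to stay two-valued, and they are guaranteed by the integrality and boundedness hypotheses in Definition \ref{residuatedlattice}.
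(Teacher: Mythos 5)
Your proposal is correct and follows essentially the same route as the paper: a finite case check using the lattice extremes for $\land$ and $\lor$, the identity and absorption laws for $\cdot$, and items (3) and (5) of Theorem \ref{cuantales1} for $\rightarrow$, with $\sim$ reduced to implication. (In fact your citations are slightly more accurate than the paper's, which points to items 3 and 4 where item 5 is the one giving $0\rightarrow y=1$.)
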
    
  
\begin{proof}
    \begin{enumerate}
        \item By  Theorem \ref{cuantales1} items 3. and 4., $(0\rightarrow y)=1$ and $(1\rightarrow y)=y$ for all $y\in \{0, 1\}$, and this implies that $x\rightarrow y \in\{0, 1\}$ for all $x, y\in \{0, 1\}$.
        
         \item Since $0$ is the minimum and  $1$ the maximum of $\mathbb{Q}$, we have $x\land y=1$ if $x=y=1$ and $x\land y=0$ if any of them is equal to $0$.
         
         \item It is proved in a similar way as the previous item.
         
         \item Since $1$ is the module of the monoid ($\mathbb{Q}, \cdot, 1$), we have $x\cdot 1= 1\cdot x=x$ for all $x\in \{0, 1\}$, but on the other hand, by Theorem \ref{cuantales1} 6., we have $x\cdot 0=0\cdot x=0$ for all $x\in \{0, 1\}$. The above implies that $x\cdot y\in \{0, 1\}$ for all $x, y\in \{0, 1\}$.
        \item By Theorem \ref{cuantales2} 5., we have $\sim 0=1$ and $\sim 1=0$, which implies that $\sim x\in \{0,1\}$ for all $x, y\in \{0, 1\}$.
        
    \end{enumerate}
\end{proof}

\begin{Coro}
    If $\mathbb{Q}=(\mathbb{Q},\land,\lor,\cdot,\rightarrow, 1, 0)$ is a commutative integral Quantale, then $\{0, 1\}\subseteq \mathbb{Q}$ is a Boolean algebra with the operations inherited from $\mathbb{Q}$ and $x\cdot y=x\land y$ for all $x, y\in \{0, 1\}$.
\end{Coro}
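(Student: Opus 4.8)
The plan is to verify that the two-element substructure $\{0,1\}$ satisfies the axioms of a Boolean algebra, leaning on the closure result just established in Theorem \ref{cerraduraBooleana}. That theorem guarantees $\{0,1\}$ is closed under each of $\land$, $\lor$, $\cdot$, $\rightarrow$ and $\sim$, so the inherited operations are genuinely well-defined on $\{0,1\}$; this is the only thing that lets us speak of $\{0,1\}$ as an algebra in its own right. First I would record that $(\{0,1\},\land,\lor,0,1)$ is a bounded lattice: since $0$ is the minimum and $1$ the maximum of $\mathbb{Q}$, the restriction of $\leq$ to $\{0,1\}$ is simply the two-element chain $0\leq 1$. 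Every chain is a distributive lattice, so distributivity is automatic and requires no computation.

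Next I would pin down the monoid operation. Using the case analysis already carried out in the proof of Theorem \ref{cerraduraBooleana}, item 4 --- namely $1\cdot 1=1$, $1\cdot 0=0\cdot 1=0$ and $0\cdot 0=0$ --- one sees directly that $x\cdot y$ agrees with $x\land y$ for all $x,y\in\{0,1\}$, which is the identity claimed in the statement. This coincidence also feeds into the complementation step below.

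It then remains to exhibit complements. I would take $\sim x = x\rightarrow 0$ as the candidate complement, which by Theorem \ref{cerraduraBooleana} again lands in $\{0,1\}$ and, by Theorem \ref{cuantales2}, item 5, satisfies $\sim 0=1$ and $\sim 1=0$. For the meet law, Theorem \ref{cuantales2}, item 1 gives $x\cdot(\sim x)=0$, and since $x\cdot(\sim x)=x\land(\sim x)$ on $\{0,1\}$ by the previous paragraph, we obtain $x\land(\sim x)=0$. For the join law it suffices to check the two cases $0\lor(\sim 0)=0\lor 1=1$ and $1\lor(\sim 1)=1\lor 0=1$. Hence every element has a complement, and $(\{0,1\},\land,\lor,\sim,0,1)$ is a complemented bounded distributive lattice, i.e.\ a Boolean algebra.

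I do not anticipate any genuine obstacle here: the content is essentially that the two-element bounded lattice is the canonical Boolean algebra $\mathbf{2}$, and every fact needed has already been isolated in Theorems \ref{cerraduraBooleana} and \ref{cuantales2}. The only point deserving care is making explicit that $\land$ and $\cdot$ coincide on $\{0,1\}$, since the reason one must check anything at all is that $\mathbb{Q}$ itself may fail to be idempotent; it is precisely the restriction to $\{0,1\}$ that restores $x\cdot x=x$ and with it the Heyting/Boolean behaviour.
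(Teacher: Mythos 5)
Your argument is correct and is exactly the verification the paper intends: the corollary is stated without proof as an immediate consequence of Theorem \ref{cerraduraBooleana}, and your fleshing-out (closure, the two-element chain as a bounded distributive lattice, the coincidence of $\cdot$ and $\land$ on $\{0,1\}$, and $\sim x$ as complement via Theorem \ref{cuantales2}) supplies precisely the omitted details.
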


\section{Residuated Logic}\label{residuatedlogic}

This logic was introduced by Ulrich Höhle \cite{Hohle1994} under the name {\it Monoidal Logic} in order to present a general framework for the study of {\it Fuzzy Logics} based on t-norms, {\it Intuitionistic Logic} and {\it Girard’s Linear Logic}. In his article, Höhle considers residuated integral commutative 1-monoids (i.e. complete residuated lattices in our terms) as a set of truth values of his logic, presents a {\it completeness} and {\it soundness theorem}, and shows some interactions of it with the other logics mentioned.

Throughout this work, we will call Höhle’s {\it Monoidal Logic} as {\it Residuated Logic}, following Lano’s notation \cite{Lano1992a} in his study of Residuated Logic and Fuzzy sets, where this logic is studied in its modal variant and is applied in the context of set-theoretic models valued on Residuated lattices. \\
    
    \begin{Def}[Logical symbols, \cite{Lano1992a}]\label{StrongConjunction}
        The fundamental difference between Classical (or Intuitionistic) Logic and Fuzzy (or Residuated) Logic is that we consider additional logical symbols, namely, in Residuated Logic, we consider two types of \textbf{conjunction}, a \textbf{weak conjunction} ($\land$) and a \textbf{strong conjunction} ($\&$). 
        
        The following symbols are definable, using the usual logical symbols:
        \begin{enumerate}
            \item Equivalence $\varphi \equiv \psi:=(\varphi \rightarrow \psi) \& (\psi \rightarrow \varphi)$,
            \item Negation $\sim \varphi := \varphi \rightarrow \bot$,
            \item Tautology $\top:=\sim \bot$.
        \end{enumerate}
    \end{Def}
    \begin{Def}[Propositional formulas, \cite{Lano1992a}]\label{residuatedformulas}
        The construction of formulas is done by recursion on a manner analogous to how it is done in Classical Propositional Logic. To differentiate these formulas from formulas in Classical (or Intuitionistic)  Logic, we call them \textit{Residuated (Propositional) formulas}, or $R$-formulas for short. 
    \end{Def}

%The language of Modal Residuated Logic has the same symbols as the Residuated Logic together with an unary connective of \textit{possibility} $\Diamond$. The idea is to interpret some \textbf{quantic nucleus} as the \textbf{possibility} operator when we deal with the semantics. The formulas in a given first-order language $\mathcal{L}$ are constructed by recursion in the same way as for the residuated case and we  call them formulas \textit{Modal Residuated $\mathcal{L}$-formulas}, or \textit{$MR-\mathcal{L}$-formulas}, for short.

\subsection{Models of Residuated Logic}

This logic has natural models in the form of \textit{Quantale-valued models}. In this models, the truth of the formulas are values within a complete commutative Quantale. We will not develop the theory of this models. The development of these models can be found in \cite{Lano1992a}, where Lano mentions the Completeness and Soundness of these models (where He calls them \textbf{Residuated
algebra valued models}) and then goes on to prove the Completeness Theorem for \textbf{topological Residuated algebra valued models} and \textbf{Residuated Modal Logic $RS_5$}. 

\section{Valued-models of sets theory.}\label{valuedmodelsofsetheory}
Throughout this section, $\mathbb{Q}$ is taken as a commutative integral Quantale.

In this section, we will discuss a way to build a version of the von Neumann universe in the context of commutative integral Quantale-valued models. 

    \begin{Def}\label{V^Q}
        We define $V_{\alpha}^{\mathbb{Q}}$ by recursion on ordinals:    
        \begin{enumerate}
            \item $V_0^{\mathbb{Q}}:=\emptyset$
            \item $V_{\alpha+1}^{\mathbb{Q}}:=\{f: f \text{ is a function with }dom(f)\subseteq V_{\alpha}^{\mathbb{Q}} \text{ and }ran(f)\subseteq \mathbb{Q}\}$
             \item $V_{\alpha}^{\mathbb{Q}}:=\bigcup\limits_{\beta<\alpha} V_{\beta}^{\mathbb{Q}}$ with $\alpha\not =0$ limit ordinal.
            \item $V^{\mathbb{Q}}:=\bigcup\limits_{\alpha\in ON} V_{\alpha}^{\mathbb{Q}}$.        
        \end{enumerate}
    \end{Def}

     \begin{Def}
        We define the language $\mathcal{L}_{\mathbb{Q}}$ as $\mathcal{L}_{\mathbb{Q}}=\mathcal{L}_{\in}\cup \{c_a: a\in V^{\mathbb{Q}}\}$, where each $c_a$ is a constant symbol.
    \end{Def}

    \begin{Def}
        We define interpretations of $\in$ and $=$ in $V^{\mathbb{Q}}$ as  
        \begin{enumerate}
            \item $\ldbrack  f \subseteq g  \rdbrack_{\mathbb{Q}}:=\bigwedge\limits_{x\in dom( f )}( f (x)\rightarrow\ldbrack x\in  g \rdbrack_{\mathbb{Q}})
            $
            
            \item $\ldbrack  f = g  \rdbrack_{\mathbb{Q}}:=\ldbrack  f \subseteq g  \rdbrack_{\mathbb{Q}}\cdot \ldbrack  g \subseteq f  \rdbrack_{\mathbb{Q}}
            $
            
            \item $\ldbrack  f \in g  \rdbrack_{\mathbb{Q}}:=\bigvee\limits_{x\in dom( g )}( g (x)\cdot \ldbrack x=  f \rdbrack_{\mathbb{Q}})$            
        \end{enumerate}
    \end{Def}

\begin{Def}\label{valuacionquantal}
        Since we already have a valuation for atomic sentences, we extend this to a valuation on all $R$-sentences in the language $\mathcal{L}_{\mathbb{Q}}$ by recursion on the complexity of the sentences:
        \begin{enumerate}
            \item Let $\psi$ and $\varphi$ be $R-\mathcal{L}_{\mathbb{Q}}$-sentences and $\theta(x)$ an $R-\mathcal{L}_{\mathbb{Q}}$-formula with free variable $x$.
                         
            \begin{enumerate}[label=\alph*.]
            
            \item $\ldbrack \psi\& \varphi \rdbrack_{\mathbb{Q}}:=\ldbrack\psi \rdbrack_{\mathbb{Q}}\cdot\ldbrack\varphi \rdbrack_{\mathbb{Q}}$
            
            \item $\ldbrack \psi\rightarrow\varphi \rdbrack_{\mathbb{Q}}:=\ldbrack\psi \rdbrack_{\mathbb{Q}}\rightarrow\ldbrack\varphi \rdbrack_{\mathbb{Q}}$
            
            \item $\ldbrack \psi\land \varphi \rdbrack_{\mathbb{Q}}:=\ldbrack\psi \rdbrack_{\mathbb{Q}}\land\ldbrack\varphi \rdbrack_{\mathbb{Q}}$
            
            \item $\ldbrack \psi\lor \varphi \rdbrack_{\mathbb{Q}}:=\ldbrack\psi \rdbrack_{\mathbb{Q}}\lor\ldbrack\varphi \rdbrack_{\mathbb{Q}}$
            
            \item $\ldbrack \exists x\theta(x) \rdbrack_{\mathbb{Q}}:=\bigvee\limits_{a\in V^{\mathbb{Q}}}\ldbrack\theta(c_a) \rdbrack_{\mathbb{Q}}$
            
            \item $\ldbrack \forall x\theta(x) \rdbrack_{\mathbb{Q}}:=\bigwedge\limits_{a\in V^{\mathbb{Q}}}\ldbrack\theta(c_a) \rdbrack_{\mathbb{Q}}$
            \end{enumerate}     
        \end{enumerate}

\end{Def}  

\begin{Def}
    If $\varphi$ is an $R-\mathcal{L}_{\mathbb{Q}}$-sentence, we say that $V^{\mathbb{Q}}$ is a model of $\varphi$ if $\ldbrack \varphi \rdbrack_{\mathbb{Q}}=1$.
\end{Def}

    \begin{Teo}[\cite{SS1967, Grayson1979, Lano1992a}]
    \begin{enumerate}
        \item If $\mathbb{B}$ is a Boolean algebra, then $V^{\mathbb{B}}$ is a model of Classical Logic.
        \item If $\mathbb{H}$ is a Heyting algebra, then $V^{\mathbb{H}}$ is a model of Intuitionistic Logic.
        \item If $\mathbb{Q}$ is a complete residuated lattice, then $V^{\mathbb{Q}}$ is a model of Residuated Logic.
    \end{enumerate}        
    \end{Teo}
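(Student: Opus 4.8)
The plan is to prove soundness by induction on the length of a derivation: it suffices to show (a) that every logical axiom of the relevant logic receives value $1$ under $\ldbrack\cdot\rdbrack_{\mathbb{Q}}$, and (b) that the rules of inference preserve the value $1$. Because the three logics are linearly ordered in strength, I would prove the most general statement, item 3, in full and then obtain items 2 and 1 by specialization. By Theorem \ref{idempotentisheyting} an idempotent commutative integral Quantale is a Heyting algebra with $\cdot=\land$, so over $\mathbb{H}$ the strong and weak conjunctions collapse and the extra intuitionistic schema (contraction, $\varphi\rightarrow(\varphi\,\&\,\varphi)$) is validated, since its value is $a\rightarrow(a\cdot a)=a\rightarrow a=1$; over a Boolean algebra one has in addition $a\lor\sim a=1$ and $\sim\sim a=a$ (both failing in general by Theorem \ref{cuantales2}), which validates the classical law of excluded middle and hence the remaining classical axioms.

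The technical core is an \emph{equality lemma} collecting the semantic counterparts of the equality axioms, proved by a simultaneous induction on the ranks of the names involved (for the atomic clauses) and thereafter on formula complexity. First I would establish, for $u,v,w\in V^{\mathbb{Q}}$: reflexivity $\ldbrack u=u\rdbrack_{\mathbb{Q}}=1$ (which by the defining clauses reduces to $f(x)\le\ldbrack x\in f\rdbrack_{\mathbb{Q}}$, itself following from $f(x)=f(x)\cdot\ldbrack x=x\rdbrack_{\mathbb{Q}}$ at lower rank); symmetry (immediate from the definition of $\ldbrack\cdot=\cdot\rdbrack_{\mathbb{Q}}$ and commutativity of $\cdot$); and the transitivity-type inequalities $\ldbrack u=v\rdbrack_{\mathbb{Q}}\cdot\ldbrack v=w\rdbrack_{\mathbb{Q}}\le\ldbrack u=w\rdbrack_{\mathbb{Q}}$ and $\ldbrack u\in v\rdbrack_{\mathbb{Q}}\cdot\ldbrack v=w\rdbrack_{\mathbb{Q}}\le\ldbrack u\in w\rdbrack_{\mathbb{Q}}$. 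All of these rest only on the adjunction $x\cdot(x\rightarrow y)\le y$, monotonicity, and the distributivity $x\cdot\bigvee_i y_i=\bigvee_i(x\cdot y_i)$ from Definition \ref{residuatedlattice}, together with the items of Theorem \ref{cuantales1}.

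From the atomic case I would lift to the general \emph{substitution (Leibniz) lemma}: for an $R$-formula $\theta$ in which the substituted variable occurs $n$ times, $\ldbrack u=v\rdbrack_{\mathbb{Q}}^{\,n}\cdot\ldbrack\theta(u)\rdbrack_{\mathbb{Q}}\le\ldbrack\theta(v)\rdbrack_{\mathbb{Q}}$, by induction on $\theta$. The connective cases consume the factors of $\ldbrack u=v\rdbrack_{\mathbb{Q}}$ additively: for $\theta=\theta_1\,\&\,\theta_2$ with $n_i$ occurrences in $\theta_i$, I would use $\ldbrack u=v\rdbrack_{\mathbb{Q}}^{\,n_1+n_2}=\ldbrack u=v\rdbrack_{\mathbb{Q}}^{\,n_1}\cdot\ldbrack u=v\rdbrack_{\mathbb{Q}}^{\,n_2}$, monotonicity, and commutativity to distribute the power across the two conjuncts; the quantifier cases use that $\cdot$ commutes with $\bigvee$ (for $\exists$) and sub-distributes over $\bigwedge$ (for $\forall$, by monotonicity). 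This exponent $n$ is exactly the bookkeeping appearing in the Deduction Theorem for $RPC$ and in the residuated substitution schema $(x=y)^n\,\&\,\theta(x)\Rightarrow\theta(y)$, so once the lemma is in place the corresponding axiom evaluates to $1$ by item 1 of Theorem \ref{cuantales1}. In the idempotent (Heyting, Boolean) specializations the power collapses, recovering the classical one-factor substitution.

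With the equality lemma available, checking the propositional axiom schemas is a direct translation into Quantale inequalities: e.g. $\ldbrack\varphi\rightarrow(\psi\rightarrow\varphi)\rdbrack_{\mathbb{Q}}=a\rightarrow(b\rightarrow a)=1$ since $a\cdot b\le a$ by integrality; the residuation schema reduces to item 10 of Theorem \ref{cuantales1}; and so on through the axiomatization of Residuated Logic. For the rules, modus ponens is preserved because $\ldbrack\varphi\rightarrow\psi\rdbrack_{\mathbb{Q}}=1$ means $a\le b$ by item 1 of Theorem \ref{cuantales1}, whence $a=1$ forces $b=1$, and generalization is preserved because $\ldbrack\forall x\,\theta(x)\rdbrack_{\mathbb{Q}}=\bigwedge_{a}\ldbrack\theta(c_a)\rdbrack_{\mathbb{Q}}$ is $1$ as soon as every conjunct is. I expect the main obstacle to be precisely the substitution lemma in the non-idempotent setting: one must track occurrences of the substituted variable so that the semantic power $\ldbrack u=v\rdbrack_{\mathbb{Q}}^{\,n}$ matches the syntactic $(x=y)^n$, and one must verify that the existential clause uses only the genuine distributivity $x\cdot\bigvee_i y_i=\bigvee_i(x\cdot y_i)$ guaranteed by the Quantale axioms, while the universal clause needs just the always-available $x\cdot\bigwedge_i y_i\le\bigwedge_i(x\cdot y_i)$; relying instead on a distributivity of $\cdot$ over $\bigwedge$, which fails in general, would break the argument.
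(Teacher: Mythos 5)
The paper does not actually prove this theorem: it is stated as an imported result, with the proofs living in the cited sources (Scott--Solovay and Bell for the Boolean case, Grayson for the Heyting case, H\"ohle and Lano for the residuated case). So there is no internal proof to compare against; what can be said is that your outline reproduces the standard soundness argument used in those references, and it is essentially correct. The decomposition is the right one: prove item 3 and specialize, with idempotency (Theorem \ref{idempotentisheyting}) collapsing $\&$ to $\land$ for the Heyting case and excluded middle supplying the rest for the Boolean case. The technical heart is exactly where you locate it, namely the equality/Leibniz lemmas. Two small confirmations on the points you flag as delicate: the single-power transitivity $\ldbrack u=v\rdbrack_{\mathbb{Q}}\cdot\ldbrack v=w\rdbrack_{\mathbb{Q}}\leq\ldbrack u=w\rdbrack_{\mathbb{Q}}$ does go through without extra exponents, by a simultaneous induction on the ranks of all three names (each of $\ldbrack u\subseteq w\rdbrack_{\mathbb{Q}}$ and $\ldbrack w\subseteq u\rdbrack_{\mathbb{Q}}$ consumes one copy of the corresponding inclusion values, and the product of the two recombines into $\ldbrack u=v\rdbrack_{\mathbb{Q}}\cdot\ldbrack v=w\rdbrack_{\mathbb{Q}}$); and in the implication case of the substitution lemma you need the induction hypothesis applied in the reversed direction to the antecedent (using symmetry of $\ldbrack\cdot=\cdot\rdbrack_{\mathbb{Q}}$), which your occurrence-counting handles but which is worth stating explicitly since it is the step that silently uses commutativity of $\cdot$. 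With those details written out, your plan is a complete proof and agrees with the literature the paper relies on.
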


\subsection{Constructibility on valued-models of Set Theory}\label{constructiblesetsonvaluedmodels}

In this subsection, we will study two different versions of the constructible sets G\"odel universe in the context of commutative integral Quantale-valued models (Definitions~\ref{mathfrakL} -by using a weak notion of definibility- and \ref{mathbbL}). We will prove that they are isomorphic to the classical von Neumann and G\"odel universes, respectively.

\begin{Def}[\cite{Moncayo2023}]
        Let $M\subseteq V^{\mathbb{Q}}$ be a subclass. We can view $M$ as a $\mathbb{Q}$-valued $\mathcal{L}_{\in}$-structure by taking the restrictions on $M$ from the interpretations of $\in$ and $=$ on $V^{\mathbb{Q}}$ and we can obtain a valuation $\ldbrack \cdot\rdbrack_M$.\\
        We say that $f$ is a \textit{weakly }$\mathbb{Q}-$\textit{definable subset} of $M$ if:
        \begin{enumerate}
            \item $f\in V^\mathbb{Q}$.
            \item $dom(f)\subseteq M$.
            \item There is a first-order $\mathcal{L}_{\in}$-formula $\varphi(x,\bar{y})$ and $\bar{b}\in M^{|\bar{y}|}$ such that for every $a\in dom(f)$
            \begin{center}
                $f(a)=\ldbrack \varphi(a,\bar{b})\rdbrack_M$.
            \end{center}
        \end{enumerate}
        And we define the class of weakly $\mathbb{Q}$-definable subsets of $M$ as
        \begin{center}
            $Def^{\mathbb{Q}^*}(M):=\{f\in V^{\mathbb{Q}} :f \text{ is a weakly }\mathbb{Q}\text{-definable subset of }M\}.$
        \end{center}
\end{Def}

\begin{Def}[\cite{Moncayo2023}]\label{mathfrakL}
        We define $\mathfrak{L}_{\alpha}^{\mathbb{Q}}$, by transfinite recursion over the ordinals, as follows
        
        \begin{enumerate}
            \item $\mathfrak{L}_0^{\mathbb{Q}}:=\emptyset$.
            \item $\mathfrak{L}_{\alpha+1}^{\mathbb{Q}}:=Def^{\mathbb{Q}^*}(\mathfrak{L}_{\alpha}^{\mathbb{Q}}).$ 
            \item $\mathfrak{L}_{\alpha}^{\mathbb{Q}}:=\bigcup\limits_{\beta<\alpha} \mathfrak{L}_{\beta}^{\mathbb{Q}}$ for $\alpha\not=0$ limit ordinal.
            \item $\mathfrak{L}^{\mathbb{Q}}:=\bigcup\limits_{\alpha\in ON} \mathfrak{L}_{\alpha}^{\mathbb{Q}}$.
        \end{enumerate}
\end{Def}

\begin{Prop}[\cite{Moncayo2023}]
    For all  $\alpha, \beta\in ON$, 
    \begin{enumerate}
        \item $\mathfrak{L}_{\alpha}^{\mathbb{Q}}\subseteq V^{\mathbb{Q}}_{\alpha}$.
        \item $\mathfrak{L}_{\alpha}^{\mathbb{Q}}\subseteq \mathfrak{L}^{\mathbb{Q}}_{\alpha+1}$.
        \item If $\alpha<\beta$, then $\mathfrak{L}_{\alpha}^{\mathbb{Q}}\subseteq \mathfrak{L}^{\mathbb{Q}}_{\beta}$
        
    \end{enumerate}
    
\end{Prop}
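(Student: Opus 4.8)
The plan is to prove the three items by transfinite induction, following the classical template for $L_\alpha \subseteq V_\alpha$, $L_\alpha \subseteq L_{\alpha+1}$ and the monotonicity of the $L_\alpha$'s, while keeping track of the fact that the members of $\mathfrak{L}_\alpha^{\mathbb{Q}}$ are $\mathbb{Q}$-valued functions rather than ordinary sets. Item 1 is the easiest and I would dispatch it first by induction on $\alpha$. The base case $\mathfrak{L}_0^{\mathbb{Q}} = \emptyset = V_0^{\mathbb{Q}}$ and the limit case (where both sides are unions of the previous stages) are immediate from the hypothesis. For the successor step, take $f \in \mathfrak{L}_{\alpha+1}^{\mathbb{Q}} = Def^{\mathbb{Q}^*}(\mathfrak{L}_\alpha^{\mathbb{Q}})$; by weak $\mathbb{Q}$-definability $f \in V^{\mathbb{Q}}$ is a function with $ran(f)\subseteq\mathbb{Q}$ and $dom(f) \subseteq \mathfrak{L}_\alpha^{\mathbb{Q}}$, so the hypothesis $\mathfrak{L}_\alpha^{\mathbb{Q}} \subseteq V_\alpha^{\mathbb{Q}}$ gives $dom(f)\subseteq V_\alpha^{\mathbb{Q}}$, whence $f \in V_{\alpha+1}^{\mathbb{Q}}$ directly from Definition~\ref{V^Q}.

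For items 2 and 3 I would run a single simultaneous transfinite induction, since item 3 (monotonicity) is what makes the domains grow correctly in the proof of item 2, while item 2 feeds the successor and limit cases of item 3. Concretely, for item 3 I fix $\alpha$ and induct on $\beta > \alpha$: the base case $\beta = \alpha+1$ is item 2; at a successor $\beta = \delta+1$ I chain the inductive hypothesis $\mathfrak{L}_\alpha^{\mathbb{Q}} \subseteq \mathfrak{L}_\delta^{\mathbb{Q}}$ with item 2 applied at $\delta$; and at a limit $\beta$ the inclusion is immediate because $\mathfrak{L}_\beta^{\mathbb{Q}}$ is the union over $\delta < \beta$. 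Thus everything reduces to item 2.

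For item 2, given $f \in \mathfrak{L}_\alpha^{\mathbb{Q}}$ I would let $\gamma$ be least with $f \in \mathfrak{L}_{\gamma+1}^{\mathbb{Q}} = Def^{\mathbb{Q}^*}(\mathfrak{L}_\gamma^{\mathbb{Q}})$; such a $\gamma$ exists and satisfies $\gamma < \alpha$, since nothing new is added at $0$ or at limit stages. This immediately gives $dom(f) \subseteq \mathfrak{L}_\gamma^{\mathbb{Q}} \subseteq \mathfrak{L}_\alpha^{\mathbb{Q}}$, using monotonicity at levels below $\alpha$ (available by the simultaneous induction). It then remains to exhibit $f$ as an element of $Def^{\mathbb{Q}^*}(\mathfrak{L}_\alpha^{\mathbb{Q}})$, and here I would take the atomic formula $\varphi(x,y) := x \in y$ with the parameter $f \in \mathfrak{L}_\alpha^{\mathbb{Q}}$, so that the candidate value is $\ldbrack a \in f\rdbrack_{\mathfrak{L}_\alpha^{\mathbb{Q}}} = \bigvee_{x \in dom(f)}(f(x)\cdot\ldbrack x = a\rdbrack)$. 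A useful point is that, this formula being quantifier-free, its value does not depend on the ambient model $M$ (only quantifiers range over $M$), so no absoluteness of the valuation across the levels $\mathfrak{L}_\gamma^{\mathbb{Q}} \subseteq \mathfrak{L}_\alpha^{\mathbb{Q}}$ is required.

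The main obstacle is precisely the final matching: one must verify that $f(a) = \ldbrack a \in f\rdbrack_{\mathfrak{L}_\alpha^{\mathbb{Q}}}$ for every $a \in dom(f)$, whereas the definition only yields $\ldbrack a \in f\rdbrack \geq f(a)$ in general (take $x = a$ in the supremum). Establishing the reverse inequality is the heart of the argument, since $\bigvee_{x\in dom(f)}(f(x)\cdot\ldbrack x = a\rdbrack)$ can strictly exceed $f(a)$ when $dom(f)$ contains two distinct representatives $x,a$ of the same set with $\ldbrack x = a\rdbrack$ large and $f(x) > f(a)$. I would therefore carry along, in the same induction, an auxiliary extensionality lemma asserting that every $f \in \mathfrak{L}_\alpha^{\mathbb{Q}}$ is \emph{reduced}, i.e. satisfies $\bigvee_{x\in dom(f)}(f(x)\cdot\ldbrack x = a\rdbrack) = f(a)$ on its domain; this would follow from the fact that the values $f(a) = \ldbrack\psi(a,\bar b)\rdbrack_{\mathfrak{L}_\gamma^{\mathbb{Q}}}$ assigned by $Def^{\mathbb{Q}^*}$ respect the internal equality $\ldbrack x = a\rdbrack$. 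This extensionality/reduction property — not the bookkeeping of the transfinite induction — is where the genuine work lies, and it is what guarantees literal containment rather than mere equality of values in $V^{\mathbb{Q}}$.
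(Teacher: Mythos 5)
The paper itself gives no proof of this Proposition (it is cited to \cite{Moncayo2023}), so your attempt has to be judged on its own terms. Your treatment of item 1, your reduction of item 3 to item 2 by a simultaneous induction, and your choice of the atomic formula $x\in y$ with parameter $f\in\mathfrak{L}_{\alpha}^{\mathbb{Q}}$ to witness $f\in Def^{\mathbb{Q}^*}(\mathfrak{L}_{\alpha}^{\mathbb{Q}})$ are all correct, and you rightly identify that the whole weight of the argument falls on the identity $f(a)=\ldbrack a\in f\rdbrack=\bigvee_{x\in dom(f)}\bigl(f(x)\cdot\ldbrack x=a\rdbrack\bigr)$, of which only the inequality $\geq$ is automatic (and even that quietly uses $\ldbrack a=a\rdbrack=1$, a routine $\in$-induction worth recording).

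The gap is in how you propose to get the reverse inequality. Your auxiliary ``reduction'' lemma is to be derived ``from the fact that the values $f(a)=\ldbrack\psi(a,\bar b)\rdbrack_{\mathfrak{L}_{\gamma}^{\mathbb{Q}}}$ respect the internal equality,'' i.e.\ from $\ldbrack x=a\rdbrack\cdot\ldbrack\psi(x,\bar b)\rdbrack\leq\ldbrack\psi(a,\bar b)\rdbrack$. That single-occurrence Leibniz law is precisely what fails in a non-idempotent quantale: as the Introduction of this paper emphasizes, only $\ldbrack x=a\rdbrack^{n}\cdot\ldbrack\psi(x)\rdbrack\leq\ldbrack\psi(a)\rdbrack$ is valid, where $n$ counts the substituted occurrences, and $q^{n}$ can lie strictly below $q$ when $0<q<1$ (e.g.\ for the \L ukasiewicz $t$-norm). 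So the lemma cannot be proved the way you indicate, and without it the matching $f(a)=\ldbrack a\in f\rdbrack$ is not established. The repair is to route through two-valuedness: the proof of Theorem \ref{L^Bistwovalued} does not depend on this Proposition and shows that $\ldbrack x=a\rdbrack$ and all values $\ldbrack\psi(x,\bar b)\rdbrack_{\mathfrak{L}_{\gamma}^{\mathbb{Q}}}$ with $x,a,\bar b\in\mathfrak{L}_{\gamma}^{\mathbb{Q}}$ lie in $\{0,1\}$; on $\{0,1\}$ the product is idempotent (Theorem \ref{cerraduraBooleana}), so the congruence $f(x)\cdot\ldbrack x=a\rdbrack\leq f(a)$ does hold there (if $\ldbrack x=a\rdbrack=1$ then $\ldbrack x=a\rdbrack^{n}=1$ for every $n$), and your argument then closes. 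Either prove two-valuedness first, or supply the reduction property by some other argument; as written, the key step rests on a principle the ambient logic does not validate.
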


\begin{Teo}[\cite{Moncayo2023}]\label{L^Bistwovalued}
    If $f\in \mathfrak{L}^{\mathbb{Q}}$, then $ran(f)\subseteq \{0,1\}=2$. That is, $\mathfrak{L}^{\mathbb{Q}}\subseteq V^2$. 
\end{Teo}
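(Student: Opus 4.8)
The plan is to prove, by transfinite induction on $\alpha$, the statement that every $f\in\mathfrak{L}_{\alpha}^{\mathbb{Q}}$ satisfies $ran(f)\subseteq\{0,1\}$; since $\mathfrak{L}^{\mathbb{Q}}=\bigcup_{\alpha\in ON}\mathfrak{L}_{\alpha}^{\mathbb{Q}}$, this yields the theorem. The base case $\mathfrak{L}_0^{\mathbb{Q}}=\emptyset$ is vacuous, and the limit case is immediate since $\mathfrak{L}_{\alpha}^{\mathbb{Q}}$ is then a union of earlier stages, each covered by the induction hypothesis. All the work sits in the successor step: assuming every element of $M:=\mathfrak{L}_{\gamma}^{\mathbb{Q}}$ has range in $\{0,1\}$, I must show the same for every $f\in\mathfrak{L}_{\gamma+1}^{\mathbb{Q}}=Def^{\mathbb{Q}^*}(M)$. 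By definition such an $f$ comes with an $\mathcal{L}_{\in}$-formula $\varphi(x,\bar y)$ and parameters $\bar b\in M^{|\bar y|}$ so that $f(a)=\ldbrack\varphi(a,\bar b)\rdbrack_M$ for every $a\in dom(f)\subseteq M$; hence it suffices to prove that $\ldbrack\psi\rdbrack_M\in\{0,1\}$ for every $\mathcal{L}_{\in}$-sentence $\psi$ whose constants $c_a$ are drawn from $M$.

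This splits into two nested arguments. First, I would show that the atomic values $\ldbrack g\in h\rdbrack_M$, $\ldbrack g=h\rdbrack_M$ and $\ldbrack g\subseteq h\rdbrack_M$ lie in $\{0,1\}$ for all $g,h\in M$. These three quantities are defined by a single recursion that descends along the $\in$-ranks of its arguments (the value for a pair reduces to values for pairs of strictly smaller rank, bottoming out at empty meets and joins equal to $1$ and $0$ respectively), so this is a standard well-founded recursion. Its building blocks are the coefficients $g(x)$, which lie in $\{0,1\}$ by the outer induction hypothesis because $g\in M$, combined through $\rightarrow$, $\cdot$, $\bigwedge$ and $\bigvee$. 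By Theorem \ref{cerraduraBooleana} the set $\{0,1\}$ is closed under $\rightarrow$ and $\cdot$, and it is trivially closed under arbitrary meets and joins (a meet is $0$ as soon as some argument is $0$ and is $1$ otherwise, dually for joins, with the empty cases giving $1$ and $0$); hence every atomic value stays in $\{0,1\}$. Second, I run an induction on the structure of $\psi$: the atomic case is what was just proved; the connective cases $\land,\lor,\rightarrow$ (and $\&$, should it occur) follow from closure of $\{0,1\}$ under $\land,\lor,\rightarrow,\cdot$ given by Theorem \ref{cerraduraBooleana}; and the quantifier cases $\ldbrack\exists x\,\chi\rdbrack_M=\bigvee_{a\in M}\ldbrack\chi(c_a)\rdbrack_M$ and $\ldbrack\forall x\,\chi\rdbrack_M=\bigwedge_{a\in M}\ldbrack\chi(c_a)\rdbrack_M$ follow from closure under arbitrary joins and meets together with the induction hypothesis applied to each $a\in M$. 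This gives $f(a)\in\{0,1\}$ for all $a\in dom(f)$ and completes the successor step.

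The delicate point, and the one I would write out most carefully, is the interlocking of the three inductions: the outer transfinite recursion on $\alpha$ supplies the crucial fact that the coefficients $g(x)$ are already two-valued, the middle $\in$-recursion handles the mutually recursive atomic valuations, and the inner formula-recursion handles the logical connectives and quantifiers. The non-idempotency of $\cdot$ is exactly what one might fear could push a value off $\{0,1\}$, but item $4$ of Theorem \ref{cerraduraBooleana} shows $\{0,1\}$ is closed under $\cdot$ as well, so the argument is insensitive to whether $\varphi$ is read with the strong or the weak conjunction. Finally, the same transfinite induction shows hereditarily that $dom(f)\subseteq M\subseteq V^2$, so each $f$ is a two-valued function whose domain already lies in $V^2$; hence $f\in V^2$ and $\mathfrak{L}^{\mathbb{Q}}\subseteq V^2$, as claimed.
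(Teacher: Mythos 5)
Your proposal is correct and follows essentially the same route as the paper's proof: an outer transfinite induction on $\alpha$ supplying two-valuedness of the coefficients, a middle well-founded induction on pairs handling the mutually recursive atomic valuations, and an inner induction on formulas using the closure of $\{0,1\}$ under the lattice operations (Theorem \ref{cerraduraBooleana}) and under arbitrary meets and joins. Your closing remark that the domains lie hereditarily in $V^2$ makes explicit a point the paper leaves implicit, but the argument is the same.
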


\begin{proof}
    We prove it by induction on ordinals, showing that for all  $\alpha\in ON$, if $f\in \mathfrak{L}_{\alpha}^{\mathbb{Q}}$, then $ran(f)\subseteq 2$.
    
    \textbf{Induction hypothesis $1$}: Take $\alpha\in ON$ such that for all $a\in \mathfrak{L}_{\alpha}^{\mathbb{Q}}$, $ran(a)\subseteq 2$.
    
    Let us see that if $f\in \mathfrak{L}^{\mathbb{Q}}_{\alpha+1}$, then $ran(f)\subseteq \{0,1\}=2$. Since $f\in \mathfrak{L}^{\mathbb{Q}}_{\alpha+1}$, there is a $\mathcal{L}_{\in}$-formula $\varphi(x, \bar{y})$ with $|\bar{y}|=n$ and $\bar{b}\in (\mathfrak{L}^{\mathbb{Q}}_{\alpha})^{n}$ such that for all  $a\in dom (f)\subseteq \mathfrak{L}^{\mathbb{Q}}_{\alpha}$
    \begin{center}
        $f(a)=\ldbrack \varphi(a,\bar{b})\rdbrack_{\mathfrak{L}_{\alpha}^{\mathbb{Q}}}$.
    \end{center}
    Notice that $a,b_1, b_2, ..., b_n\in \mathfrak{L}^{\mathbb{Q}}_{\alpha}$, so we can use the induction hypothesis $1$ on them, so that $ran(a), ran(b_i)\subseteq 2$ for all  $1\leq i\leq n$.
    
    We will prove that if $f\in \mathfrak{L}^{\mathbb{Q}}_{\alpha+1}$, then $ran(f)\subseteq \{0,1\}=2$, using induction on formulas, where the formulas can take parameters from $\mathfrak{L}^{\mathbb{Q}}_{\alpha}$.
    
    We start with the atomic case. We want to prove that $\ldbrack a\in b\rdbrack, \ldbrack b\in a\rdbrack, \ldbrack a=b\rdbrack\in 2$ for all  $a, b\in\mathfrak{L}_{\alpha}^\mathbb{Q}$.

    We prove the statement given above by induction on the well-founded relation $<$ on $ V^{\mathbb{Q}}$, where
    \begin{center}
        $(v,w)<(a,b)$ if and only if $(v=a \text{ and } w\in dom(b)) \text{ or } (v\in dom(a) \text { and } w=b)$, where $a, b, f, g\in V^{\mathbb{Q}}$.
    \end{center}
    
    \textbf{Induction hypothesis 2}: for all  $<$-predecessors $(v,w)$ of $(f, g)$, 
     \begin{center}
         $\ldbrack v\in w\rdbrack, \ldbrack w\in v\rdbrack, \ldbrack v=w\rdbrack\in 2$.
     \end{center}
     Notice that the $<$-predecessors $(v,w)$ of $(a, b)$ have the form
    \begin{center}
        $(a, w)$ and $(v, b)$, where $v\in dom (a)$ and $w\in dom(b)$.
    \end{center}
    Then, the induction hypothesis $2$ means that for all  $v\in dom (a)$ and $w\in dom(b)$, 
    \begin{center}
        $\ldbrack a\in w\rdbrack, \ldbrack w\in a\rdbrack, \ldbrack v\in b\rdbrack, \ldbrack b\in v\rdbrack,\ldbrack a=w\rdbrack, \ldbrack v=b\rdbrack\in 2$.
    \end{center}
    
    Let us see then that
    \begin{center}
        $\ldbrack a\in b\rdbrack, \ldbrack b\in a\rdbrack, \ldbrack a=b\rdbrack\in 2$
    \end{center}
    By definition of $\ldbrack \cdot\in \cdot\rdbrack_{\mathfrak{L}_{\alpha}^{\mathbb{Q}}}$, we have that
    
    \begin{center}
        $\ldbrack a\in b\rdbrack_{\mathfrak{L}_{\alpha}^{\mathbb{Q}}}=\ldbrack a\in b\rdbrack_{V}=\bigvee\limits_{w\in dom(b)}b(w)\land \ldbrack a=w\rdbrack$
    \end{center}
    Given that $b\in \mathfrak{L}_{\alpha}^{\mathbb{Q}}$, by  induction hypothesis $1$, we have that $b(w)\in 2$. On the other hand, the induction hypothesis $2$ implies that $\ldbrack a=w\rdbrack\in 2$, so $b(w)\land\ldbrack a=w\rdbrack\in 2$ and $\bigvee\limits_{w\in dom(b)}b(w)\land \ldbrack a=w\rdbrack=\ldbrack a\in b\rdbrack_{\mathfrak{L}_{\alpha}^{\mathbb{Q}}}\in 2$.
    
    In a similar way as before, it is shown that $\ldbrack b\in a\rdbrack_{\mathfrak{L}_{\alpha}^{\mathbb{Q}}}\in 2$
    
    By definition of $\ldbrack \cdot= \cdot\rdbrack_{\mathfrak{L}_{\alpha}^{\mathbb{Q}}}$, we have that
    
    \begin{center}
        $\ldbrack a= b\rdbrack_{\mathfrak{L}_{\alpha}^{\mathbb{Q}}}=\ldbrack a=b\rdbrack_{V}=\left(\bigwedge\limits_{v\in dom(a)}a(v)\rightarrow\ldbrack v\in b\rdbrack\right)\land\left(\bigwedge\limits_{w\in dom(b)}b(w)\rightarrow\ldbrack w\in a\rdbrack\right)$
    \end{center}
    
    Since $a, b\in \mathfrak{L}_{\alpha}^{\mathbb{Q}}$, by  induction hypothesis $1$, we have that $a(v), b(w)\in 2 $. On the other hand, the induction hypothesis $2$ implies that $\ldbrack v\in b\rdbrack, \ldbrack w\in a\rdbrack\in 2$, thus
    \begin{center}
        $(a(v)\rightarrow\ldbrack v\in b\rdbrack), (b(w)\rightarrow\ldbrack w\in a\rdbrack)\in 2$.
    \end{center}
    
    Then,
    
    \begin{center}
        $\bigwedge\limits_{v\in dom(a)}a(v)\rightarrow\ldbrack v\in b\rdbrack, \bigwedge\limits_{w\in dom(b)}b(w)\rightarrow\ldbrack w\in a\rdbrack\in 2$.
    \end{center}
    
    Therefore,
    
    \begin{center}
        $\left(\bigwedge\limits_{v\in dom(a)}a(v)\rightarrow\ldbrack v\in b\rdbrack\land\bigwedge\limits_{w\in dom(b)}b(w)\rightarrow\ldbrack w\in a\rdbrack\right)=\ldbrack a= b\rdbrack_{\mathfrak{L}_{\alpha}^{\mathbb{Q}}}\in 2$.
    \end{center}
    
    Thus, by induction on the well-founded relation $<$, we have that $\ldbrack a\in b\rdbrack, \ldbrack b\in a\rdbrack, \ldbrack a=b\rdbrack\in 2$ for all  $a , b\in \mathfrak{L}_{\alpha}^{\mathbb{Q}}$. this proves the case for atomic $\mathcal{L}_{\in}$-formulas.
    
    By induction on $\mathcal{L}_{\in}$-formulas, it is straightforward to show that for every $\mathcal{L}_{\in}$-formula $\varphi(x, \bar{y})$ with $|y|=n$, $\bar {b}\in (\mathfrak{L}^{\mathbb{Q}}_{\alpha})^{n}$y $a\in dom (f)\subseteq \mathfrak{L}^{\mathbb {B}}_{\alpha}$ we have that
    \begin{center}
        $\ldbrack \varphi(a,\bar{b})\rdbrack_{\mathfrak{L}_{\alpha}^{\mathbb{Q}}}
        =f(a)\in 2$,
    \end{center}
    since if Boolean combinations of formulas that only take values in $\{0, 1\}$ are made, the result of evaluating these formulas is $0$ or $1$.
    
    Thus, we have that, for all  $f\in \mathfrak{L}^{\mathbb{Q}}_{\alpha+1}$, $ran(f)\subseteq 2$.
    
    Checking the limit ordinal case is straightforward. Then, by induction on the ordinals, we have that for all  $f\in \mathfrak{L}^{\mathbb{Q}}$, $ran(f)\subseteq 2$.
\end{proof}

This theorem tells us that the logic that governs these models is \textbf{bi-valued}.

\begin{Def}\label{inmersiongorro}
    Take $\mathbb{Q}=2$. We define a class function $\hat{\cdot}:V\rightarrow V^{2}$ as follows: Given $x\in V$, take
     \begin{center}
         $\hat{x}=\{(\hat{y},1):y\in x\}$.
     \end{center}
     This is a definition by recursion on the well-founded relation $y\in x$. Notice that, for all  $x\in V$,
     \begin{center}
         $\hat{x}\in V^2\subseteq V^{\mathbb{Q}}$.
     \end{center}
\end{Def}

\begin{Teo}[cf. \cite{Bell2005}, Theorem 1.23]\label{functionhat}
    Let $x, y, a_1, ..., a_n\in V$ and $\varphi(x_1, ..., x_n)$ be a $\mathcal{L}_{\in}$-formula. Then
    \begin{enumerate}
        \item $\ldbrack \hat{x}=\hat{y}\rdbrack_{\mathbb{Q}}=\ldbrack \hat{x}=\hat{y}\rdbrack_{\mathbb{B}}=\ldbrack \hat{x}=\hat{y} \rdbrack_{2}\in 2$.
        \item $\ldbrack \hat{x}\in\hat{y}\rdbrack_{\mathbb{Q}}=\ldbrack \hat{x}=\hat{y}\rdbrack_{\mathbb{B}}=\ldbrack \hat{x}\in\hat{y} \rdbrack_{2}\in 2$.
        \item $\ldbrack \varphi(\hat {a_1}, ..., \hat {a_n})\rdbrack_{\mathbb{Q}}, \ldbrack \varphi(\hat{a_1}, ..., \hat{a_n}) \rdbrack_{2}\in 2$
        \item $\hat{\cdot}$ is injective.
        \item $\hat{\cdot}$ is surjective in the following way: For every $u\in V^2$ there exists a \textbf{unique} $a\in V$ such that $V^{\mathbb{Q}}\models u=\hat{a}$.
        \item $\hat{\cdot}$ is an ``isomorphism'' in the following way 
        \begin{center}
            $\varphi(a_1, ..., a_n)$ holds in $V$, if and only if, $\ldbrack \varphi(\hat{a_1}, ..., \hat{a_n})\rdbrack_{V^2}=1$
        \end{center}
        \item If $\varphi(x_1, ..., x_n)$ is an $\mathcal{L}_{\in}$-formula with bounded quantifiers (i.e. if each of its quantifiers occurs in the
form $\forall x\in a$ or $\exists x\in b$)
        \begin{center}
            $\varphi(a_1, ..., a_n)$ holds in $V$, if and only if, $\ldbrack \varphi(\hat{a_1}, ..., \hat{a_n})\rdbrack_{V^\mathbb{Q}}=1$
        \end{center}
    \end{enumerate}
    
\end{Teo}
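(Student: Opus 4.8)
The plan is to prove the seven items in order of logical dependence rather than in the order listed, resting everything on one well-founded induction that settles the atomic cases.

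First I would establish items~1 and~2 together. Fixing $x,y\in V$, I argue by the well-founded induction on pairs used in the proof of Theorem~\ref{L^Bistwovalued}. Because $\hat x=\{(\hat z,1):z\in x\}$ assigns only the value $1$, every supremum and infimum occurring in $\ldbrack\hat x\subseteq\hat y\rdbrack$, $\ldbrack\hat x=\hat y\rdbrack$ and $\ldbrack\hat x\in\hat y\rdbrack$ combines the constant $1$ with values of the form $\ldbrack\hat z\in\hat w\rdbrack$ and $\ldbrack\hat z=\hat w\rdbrack$ for $z,w$ strictly below in the induction, and these already lie in $\{0,1\}$ by hypothesis. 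By Theorem~\ref{cerraduraBooleana} the set $\{0,1\}$ is closed under $\land,\lor,\cdot,\rightarrow,\sim$, and arbitrary $\bigvee,\bigwedge$ of subsets of $\{0,1\}$ stay in $\{0,1\}$ because $0$ and $1$ are the bottom and top of $\mathbb Q$; hence the atomic values lie in $2$. The three valuations coincide for free: the whole recursion only ever applies the lattice operations to arguments in $\{0,1\}$, where by Theorem~\ref{cerraduraBooleana} every commutative integral Quantale induces the same (Boolean) operations, so $\ldbrack\cdot\rdbrack_{\mathbb Q}$, $\ldbrack\cdot\rdbrack_{\mathbb B}$ and $\ldbrack\cdot\rdbrack_{2}$ compute literally the same element. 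In the same induction I would record the classical correspondence $\ldbrack\hat x=\hat y\rdbrack=1$ if and only if $x=y$, and $\ldbrack\hat x\in\hat y\rdbrack=1$ if and only if $x\in y$, the familiar computation for the canonical embedding, which feeds items~4--7.

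Item~4 then follows at once, since for $x\neq y$ the correspondence gives $\ldbrack\hat x=\hat y\rdbrack=0$. Item~5 I would prove by $\in$-induction on $u\in V^2$: as $dom(u)\subseteq V^2$ with all elements of strictly smaller rank, the induction hypothesis yields for each $w\in dom(u)$ a unique (by item~4) $b_w\in V$ with $V^{\mathbb Q}\models w=\hat{b_w}$; setting $a=\{b_w:w\in dom(u),\ u(w)=1\}$ and unwinding the two subset inequalities gives $\ldbrack u=\hat a\rdbrack=1$, while uniqueness of $a$ comes from item~4 and extensionality in $V$. For items~3, 6 and 7 I would induct on the complexity of $\varphi$, with the atomic base supplied by the correspondence above. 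The propositional connectives are uniform: on $\{0,1\}$ they are Boolean (Theorems~\ref{cuantales1}, \ref{cuantales2} and \ref{cerraduraBooleana}), so value~$1$ and classical truth track each other and values remain in $2$. The quantifier step is the heart of the matter. For item~6 we work in $V^2$, where \emph{every} value is already in $2$, and surjectivity (item~5) lets me replace the quantification $\bigvee_{u\in V^2}$ (resp. $\bigwedge_{u\in V^2}$) by the quantification over $\{\hat a:a\in V\}$ without loss, since any $u$ equals some $\hat a$ with value $1$ and substitution of equals is unproblematic in a two-valued model; the induction hypothesis then matches this against the classical $\exists$ (resp. $\forall$) over $V$. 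For item~7 the quantifier is bounded, of the form $\exists x\in\hat a$ or $\forall x\in\hat a$, and since $dom(\hat a)=\{\hat b:b\in a\}$ consists only of hats and $\hat a(\hat b)=1$, the relevant supremum/infimum effectively ranges over the hats $\hat b$ with $b\in a$; the induction hypothesis reduces this to the classical bounded quantifier over $a$, and everything stays in $2$.

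The main obstacle, and the reason the statement splits the way it does, is precisely the quantifier step for an \emph{unbounded} formula over a \emph{general} Quantale $\mathbb Q$. There the value $\bigvee_{u\in V^{\mathbb Q}}\ldbrack\theta(c_u,\hat{\bar b})\rdbrack_{\mathbb Q}$ ranges over all of $V^{\mathbb Q}$, not merely over the hats, and for a non-hat $u$ the value $\ldbrack\theta(c_u,\hat{\bar b})\rdbrack_{\mathbb Q}$ need not lie in $\{0,1\}$; indeed an instance of excluded middle such as $\forall x\,(x=\hat{\emptyset}\lor\sim(x=\hat{\emptyset}))$ can take an intermediate value over, say, the Łukasiewicz Quantale. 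This is exactly why item~6 is asserted only for the genuinely two-valued model $V^2$, where no such junk elements occur, and item~7 only for bounded formulas, where boundedness confines the quantifiers to $dom(\hat a)$ and keeps the induction inside the hat-image. Once this quantifier step is organized as above, every remaining verification is a routine propagation of the two-valuedness guaranteed by Theorem~\ref{cerraduraBooleana}.
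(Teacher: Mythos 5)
The paper does not actually prove Theorem~\ref{functionhat}: it imports it from Bell's book (``cf.\ Theorem 1.23''), so there is no in-paper argument to compare against, and your proposal is essentially the standard one for the canonical embedding. For items 1, 2, 4, 5, 6 and 7 it is sound: the well-founded induction on pairs for the atomic cases, the $\in$-induction producing $a=\{b_w: w\in dom(u),\ u(w)=1\}$ for surjectivity onto $V^2$, and the formula induction for $V^2$ (item 6) and for bounded formulas in $V^{\mathbb{Q}}$ (item 7) all work, and the observation that the three valuations literally coincide on hats because $\{0,1\}$ carries the same Boolean operations in every commutative integral Quantale (Theorem~\ref{cerraduraBooleana}) is exactly the right point.

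The gap is item 3. As stated, item 3 asserts $\ldbrack \varphi(\hat{a_1},\dots,\hat{a_n})\rdbrack_{\mathbb{Q}}\in 2$ for an \emph{arbitrary} $\mathcal{L}_{\in}$-formula $\varphi$, with no boundedness restriction, and your induction never closes this case: your own discussion of the ``main obstacle'' --- the unbounded quantifier ranging over all of $V^{\mathbb{Q}}$ rather than over the hats --- applies verbatim to item 3, and the counterexample you exhibit, $\forall x\,(x=\hat{\emptyset}\lor\sim(x=\hat{\emptyset}))$ over the \L{}ukasiewicz Quantale, which evaluates to $1/2$, actually \emph{refutes} the $\mathbb{Q}$-half of item 3 for unbounded formulas rather than merely blocking one proof strategy. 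You cannot both assert that counterexample and claim to have proved item 3. The proposal should either restrict the $\mathbb{Q}$-part of item 3 to bounded formulas (in which case it is item 7 plus closure of $\{0,1\}$ under the connectives, which your induction does deliver), or explicitly record that only the $\ldbrack\cdot\rdbrack_{2}$ half of item 3 is provable in full generality and that the statement needs to be read with that restriction.
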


\begin{Teo}[\cite{Moncayo2023}]
    For all  $x\in V$, $\hat{x}\in \mathfrak{L}^{\mathbb{Q}}$. 
\end{Teo}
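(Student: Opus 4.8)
The plan is to prove $\hat{x}\in\mathfrak{L}^{\mathbb{Q}}$ by transfinite induction on the well-founded relation $y\in x$, strengthening the statement to pin down exactly which stage of the $\mathfrak{L}^{\mathbb{Q}}$-hierarchy contains $\hat{x}$. Concretely, I would prove by induction on the ordinal $\alpha$ that for every $x\in V_{\alpha}$ (the classical von Neumann stage) we have $\hat{x}\in\mathfrak{L}^{\mathbb{Q}}_{\alpha}$; since every set lies in some $V_{\alpha}$, this yields the theorem. Assuming the inductive hypothesis that $\hat{y}\in\mathfrak{L}^{\mathbb{Q}}_{\alpha}$ for all $y\in x$, I must exhibit $\hat{x}$ as a weakly $\mathbb{Q}$-definable subset of $\mathfrak{L}^{\mathbb{Q}}_{\alpha}$, i.e.\ verify the three conditions defining $Def^{\mathbb{Q}^*}(\mathfrak{L}^{\mathbb{Q}}_{\alpha})$, so that $\hat{x}\in\mathfrak{L}^{\mathbb{Q}}_{\alpha+1}\subseteq\mathfrak{L}^{\mathbb{Q}}$.

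First I would check the domain and membership conditions. Recall $\hat{x}=\{(\hat{y},1):y\in x\}$, so $dom(\hat{x})=\{\hat{y}:y\in x\}$; by the inductive hypothesis each such $\hat{y}$ lies in $\mathfrak{L}^{\mathbb{Q}}_{\alpha}$, giving $dom(\hat{x})\subseteq\mathfrak{L}^{\mathbb{Q}}_{\alpha}$, which is exactly condition 2. Condition 1, that $\hat{x}\in V^{\mathbb{Q}}$, is already recorded in Definition~\ref{inmersiongorro}. The crux is condition 3: I must produce a single first-order $\mathcal{L}_{\in}$-formula $\varphi(u,\bar{b})$ with parameters in $\mathfrak{L}^{\mathbb{Q}}_{\alpha}$ such that for every $a\in dom(\hat{x})$,
\begin{center}
$\hat{x}(a)=\ldbrack\varphi(a,\bar{b})\rdbrack_{\mathfrak{L}^{\mathbb{Q}}_{\alpha}}$.
\end{center}
Since $\hat{x}(a)=1$ for every $a\in dom(\hat{x})$ and $0$ otherwise, the natural candidate is the defining formula $\varphi(u,w):=(u\in w)$ with the single parameter $\hat{x}$ itself. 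For this to be legitimate I need $\hat{x}\in\mathfrak{L}^{\mathbb{Q}}_{\alpha}$ as a parameter, which need not hold; so instead I would use $\varphi(u):=(u=u)$, the tautological formula with no parameters, whose value is constantly $\ldbrack a=a\rdbrack_{\mathfrak{L}^{\mathbb{Q}}_{\alpha}}$.

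The key computation is then that $\ldbrack a=a\rdbrack=1$ for every $a\in dom(\hat{x})$: by reflexivity of the equality interpretation (which follows from Theorem~\ref{cuantales1}, item 1, since $\ldbrack a\subseteq a\rdbrack_{\mathbb{Q}}=\bigwedge_{v\in dom(a)}(a(v)\rightarrow\ldbrack v\in a\rdbrack)=1$), we get $\hat{x}(a)=1=\ldbrack a=a\rdbrack_{\mathfrak{L}^{\mathbb{Q}}_{\alpha}}$ on the whole domain, confirming condition 3 with the empty parameter list. This shows $\hat{x}=Def^{\mathbb{Q}^*}$-definable over $\mathfrak{L}^{\mathbb{Q}}_{\alpha}$, hence $\hat{x}\in\mathfrak{L}^{\mathbb{Q}}_{\alpha+1}$. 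The limit-ordinal and base cases are immediate from the definition of the hierarchy. The main obstacle I anticipate is precisely the parameter bookkeeping in condition 3: a weakly definable set is cut out by a single formula whose value must equal $\hat{x}(a)$ for \emph{all} $a$ in the domain simultaneously, and one must resist the temptation to use $\hat{x}$ (or a larger-stage object) as a parameter; exploiting the fact that $\hat{x}$ is a \emph{constant-$1$} function lets the trivial formula $u=u$ do the job and sidesteps any appeal to Theorem~\ref{functionhat} about the behaviour of $\hat{\cdot}$ under $\in$.
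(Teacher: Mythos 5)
Your proposal is correct, and it supplies an argument where the paper itself gives none (the theorem is stated with only a citation to the thesis \cite{Moncayo2023}, so there is no in-paper proof to compare against). The key insight --- that $\hat{x}$ is the constant-$1$ function on its domain, so the parameter-free tautology $u=u$ witnesses weak $\mathbb{Q}$-definability over $\mathfrak{L}^{\mathbb{Q}}_{\alpha}$ once the induction hypothesis places $dom(\hat{x})=\{\hat{y}:y\in x\}$ inside $\mathfrak{L}^{\mathbb{Q}}_{\alpha}$ --- is exactly the right one, and it also explains \emph{why} $\mathfrak{L}^{\mathbb{Q}}$ degenerates to a copy of $V$: the weak notion of definability places no constraint off the domain, so any constant-$1$ function with domain in $M$ is ``definable.'' The one step you should not wave at is reflexivity, $\ldbrack a=a\rdbrack_{\mathbb{Q}}=1$: your parenthetical derivation uses $a(v)\rightarrow\ldbrack v\in a\rdbrack=1$, which requires $\ldbrack v\in a\rdbrack\geq a(v)$, and that in turn follows from $\ldbrack v\in a\rdbrack\geq a(v)\cdot\ldbrack v=v\rdbrack$ only once you know $\ldbrack v=v\rdbrack=1$ for $v\in dom(a)$. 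So reflexivity is itself a small lemma proved by induction on the well-founded membership relation on $V^{\mathbb{Q}}$ (using Theorem \ref{cuantales1} items 1 and 6), not a one-line consequence of item 1 alone; state it as such and the proof is complete. The bookkeeping $x\in V_{\alpha}\Rightarrow\hat{x}\in\mathfrak{L}^{\mathbb{Q}}_{\alpha}$, including the vacuous base case and the limit case, is fine as written.
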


Therefore, since $\mathfrak{L}^{\mathbb{Q}}\subseteq V^2$, $\mathfrak{L}^{\mathbb{Q}}$ is essentially $V$.

We then proceed to change our definition to see if we can get a more interesting model.

\begin{Def}[\cite{Moncayo2023}]
        Let $M\subseteq V^{\mathbb{Q}}$. We say that $f\in V^\mathbb{Q}$ is a $\mathbb{Q}-$\textit{definable} subset of $M$ if $f$ satisfies the following
         \begin{enumerate}
             \item $dom(f)=M$ (before it was $dom(f)\subseteq M$). 
             \item There is a classical first-order $\mathcal{L}_{\in}$-formula $\varphi(x,\bar{y})$ and $\bar{b}\in M^{|\bar{y}| }$ such that for all  $a\in dom(f)$
             \begin{center}
                 $f(a)=\ldbrack \varphi(a,\bar{b})\rdbrack_M$
             \end{center}
         \end{enumerate}
         And we define the set of $\mathbb{Q}$-definable subsets of $M$ as
        \begin{center}
            $Def^{\mathbb{Q}}(M):=\{f\in V^{\mathbb{Q}} :f \text{ is a }\mathbb{Q}\text{-definable subset of }M\}.$
        \end{center}
\end{Def}

\begin{Def}[\cite{Moncayo2023}]\label{mathbbL}
        We define by transfinite recursion over the ordinals

        \begin{enumerate}
            \item $\mathbb{L}_0^{\mathbb{Q}}:=\emptyset$
            \item $\mathbb{L}_{\alpha+1}^{\mathbb{Q}}:=Def^{\mathbb{Q}}(\mathbb{L}_{\alpha}^{\mathbb{Q}})\cup \mathbb{L}_{\alpha}^{\mathbb{Q}}$
            \item $\mathbb{L}_{\alpha}^{\mathbb{Q}}:=\bigcup\limits_{\beta<\alpha} \mathbb{L}_{\beta}^{\mathbb{Q}}$ for $\alpha\not=$0 limit.
            \item $\mathbb{L}^{\mathbb{Q}}:=\bigcup\limits_{\alpha\in ON} \mathbb{L}_{\alpha}^{\mathbb{Q}}$.
        \end{enumerate}     
\end{Def}

\begin{Obs}
    We can prove that $\mathbb{L}^{\mathbb{Q}}\subseteq V^2$ mimicking the argument that we used for the model $\mathfrak{L}^{\mathbb {B}}$ in Theorem \ref{L^Bistwovalued}.
\end{Obs}

\begin{Lema}\label{lemaigualdad}
    For all  $f, g\in \mathbb{L}^{\mathbb{Q}}$, if $\ldbrack f=g \rdbrack_{\mathbb{L}^{\mathbb{Q}}} =1$, then, for all  $\mathcal{L}_{\in}$-formula $\varphi(x, \bar{y})$ and $\bar{a}\in M^{|\bar{y}|}$,
    \begin{center}
        $\ldbrack \varphi(f, \bar{a})\rdbrack_{\mathbb{L}^{\mathbb{Q}}}=\ldbrack \varphi(g, \bar{a})\rdbrack_{\mathbb{L}^{\mathbb{Q}}}.$
    \end{center}
\end{Lema}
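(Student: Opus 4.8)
The plan is to prove the conclusion by induction on the complexity of $\varphi$, after first isolating the atomic case, which is the only place where the Quantale structure really intervenes. Throughout I will use that, since $\cdot\le\land$ in any commutative integral Quantale (Theorem \ref{cuantales1}.7) and $1$ is the top element, a product equals $1$ only when both factors equal $1$; hence the hypothesis $\ldbrack f=g\rdbrack_{\mathbb{L}^{\mathbb{Q}}}=1$ is symmetric (by commutativity of $\cdot$, $\ldbrack f=g\rdbrack=\ldbrack g=f\rdbrack$) and splits as $\ldbrack f\subseteq g\rdbrack=\ldbrack g\subseteq f\rdbrack=1$.

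First I would establish the atomic substitution inequalities in $V^{\mathbb{Q}}$ (which, as noted in the proof of Theorem \ref{L^Bistwovalued}, agree with the values computed in any subclass such as $\mathbb{L}^{\mathbb{Q}}$): for all $u,v,w\in V^{\mathbb{Q}}$,
\begin{align*}
\ldbrack u=v\rdbrack\cdot\ldbrack u\in w\rdbrack&\le\ldbrack v\in w\rdbrack,\\
\ldbrack u=v\rdbrack\cdot\ldbrack w\in u\rdbrack&\le\ldbrack w\in v\rdbrack,\\
\ldbrack u=v\rdbrack\cdot\ldbrack v=w\rdbrack&\le\ldbrack u=w\rdbrack.
\end{align*}
These are proven simultaneously by induction on the well-founded relation $<$ used in the proof of Theorem \ref{L^Bistwovalued}. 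The engine of each step is the residuation law $x\cdot(x\to y)\le y$ (Theorem \ref{cuantales1}.2) together with distributivity of $\cdot$ over arbitrary joins: for instance, $\ldbrack u=v\rdbrack\cdot\ldbrack u\in w\rdbrack=\bigvee_{x\in dom(w)}w(x)\cdot\ldbrack x=u\rdbrack\cdot\ldbrack u=v\rdbrack$, and each summand is bounded using transitivity of $=$ at $<$-smaller instances. The crucial observation is that each inequality uses only a \emph{single} copy of $\ldbrack u=v\rdbrack$ --- one occurrence is being substituted --- so the argument goes through with the product $\cdot$ and never requires idempotency; this is exactly why no power $\ldbrack u=v\rdbrack^{n}$ is needed at the atomic level.

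Feeding $\ldbrack f=g\rdbrack_{\mathbb{L}^{\mathbb{Q}}}=1$ into these inequalities (and their symmetric counterparts obtained by exchanging $f,g$) yields the base case: $\ldbrack f\in w\rdbrack=\ldbrack g\in w\rdbrack$, $\ldbrack w\in f\rdbrack=\ldbrack w\in g\rdbrack$ and $\ldbrack f=w\rdbrack=\ldbrack g=w\rdbrack$ for every parameter $w\in\mathbb{L}^{\mathbb{Q}}$, i.e. $\ldbrack\varphi(f,\bar a)\rdbrack_{\mathbb{L}^{\mathbb{Q}}}=\ldbrack\varphi(g,\bar a)\rdbrack_{\mathbb{L}^{\mathbb{Q}}}$ for atomic $\varphi$. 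The inductive step is then purely formal and, notably, free of any occurrence-counting: for a connective, say $\varphi=\psi\ast\chi$ with $\ast\in\{\&,\land,\lor,\rightarrow\}$, the valuation clause gives $\ldbrack\varphi(f,\bar a)\rdbrack=\ldbrack\psi(f,\bar a)\rdbrack\circ\ldbrack\chi(f,\bar a)\rdbrack$ for the corresponding operation $\circ$ on $\mathbb{Q}$, and applying the operation to the pairs of equal values supplied by the induction hypothesis preserves equality (negation $\sim\psi$ is the case $\psi\rightarrow 0$). For quantifiers, treating a witness $e\in\mathbb{L}^{\mathbb{Q}}$ as an extra parameter, the induction hypothesis gives $\ldbrack\theta(f,\bar a,e)\rdbrack=\ldbrack\theta(g,\bar a,e)\rdbrack$ for each $e$, so the join $\bigvee_{e\in\mathbb{L}^{\mathbb{Q}}}$ (for $\exists$) and the meet $\bigwedge_{e\in\mathbb{L}^{\mathbb{Q}}}$ (for $\forall$) of the two families coincide.

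The main obstacle is the atomic step: setting up the simultaneous well-founded induction and checking that each substitution inequality can be discharged with a single application of the residuation law, so that commutativity of the premises is all that is required and idempotency is not. Everything above the atomic level is an immediate consequence of the fact that we are proving value \emph{equality} (not an inequality with a product in front), which is why the non-idempotency of $\cdot$ --- the very feature that forces the weaker rule $(x=y)^{n}\,\&\,\theta(x)\Rightarrow\theta(y)$ in Residuated Logic --- never obstructs the argument here. Alternatively, one may invoke the Remark following Definition \ref{mathbbL} that $\mathbb{L}^{\mathbb{Q}}\subseteq V^{2}$ to note that all the values in question lie in the Boolean algebra $\{0,1\}$, where $\cdot=\land$, and run the classical substitution argument; the two routes are essentially the same.
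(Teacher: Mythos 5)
The paper states Lemma \ref{lemaigualdad} without proof (it is deferred to \cite{Moncayo2023}), so there is no in-text argument to compare against; judged on its own, your proposal is correct and essentially the standard substitution lemma for lattice-valued models. Two points deserve care. First, the single-copy atomic inequalities do hold, but the reason is slightly subtler than ``only one occurrence is substituted'': for transitivity, $\ldbrack u=w\rdbrack$ is the \emph{product} $\ldbrack u\subseteq w\rdbrack\cdot\ldbrack w\subseteq u\rdbrack$, and since $\cdot\le\land$ it is not enough to bound $\ldbrack u=v\rdbrack\cdot\ldbrack v=w\rdbrack$ by each factor separately; you must split the hypothesis as $(\ldbrack u\subseteq v\rdbrack\cdot\ldbrack v\subseteq w\rdbrack)\cdot(\ldbrack w\subseteq v\rdbrack\cdot\ldbrack v\subseteq u\rdbrack)$ and bound each half by the corresponding inclusion, which works precisely because the product definition of $=$ supplies one copy of each direction. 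Second, an atomic formula such as $x\in x$ or $x=x$ contains two occurrences of the substituted variable, so the base case needs two chained applications of the inequalities, costing $\ldbrack f=g\rdbrack^{2}$ rather than $\ldbrack f=g\rdbrack$; this is harmless here only because the hypothesis gives value $1$ and $1^{n}=1$, and it is worth saying so explicitly since non-idempotency is the whole point of the quantale setting. Given the Remark that $\mathbb{L}^{\mathbb{Q}}\subseteq V^{2}$ and Theorem \ref{cerraduraBooleana}, your closing alternative --- all relevant values lie in the Boolean subalgebra $\{0,1\}$, where $\cdot=\land$ and the classical two-valued substitution argument applies --- is the most economical route and is almost certainly what the authors intend; your quantale-level argument is more general but also more work than the lemma requires.
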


\begin{Lema}\label{lemaextensionigualdad}
    Let $f, g\in \mathbb{L}^{\mathbb{Q}}$. Suppose $f$ is an extension of $g$, i.e. $dom(g)\subseteq dom(f)$ and $f\restriction_{dom(g)}=g$. If $f(a)=0$ for all  $a\in dom(f)\setminus dom(g)$, then $V^{\mathbb{Q}}\models f=g$.
\end{Lema}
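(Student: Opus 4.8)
The plan is to unwind the definition $\ldbrack f=g\rdbrack_{\mathbb{Q}}=\ldbrack f\subseteq g\rdbrack_{\mathbb{Q}}\cdot\ldbrack g\subseteq f\rdbrack_{\mathbb{Q}}$ and show that each of the two factors already equals the top element $1$. This reduction is legitimate: by Theorem \ref{cuantales1}(7) we have $u\cdot v\leq u\wedge v\leq u$ and likewise $u\cdot v\leq v$, so a product of two elements of $\mathbb{Q}$ equals $1$ if and only if both factors equal $1$ (the converse being $1\cdot 1=1$). Hence it suffices to establish $\ldbrack f\subseteq g\rdbrack_{\mathbb{Q}}=1$ and $\ldbrack g\subseteq f\rdbrack_{\mathbb{Q}}=1$. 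Before doing so I would record the reflexivity fact $\ldbrack x=x\rdbrack_{\mathbb{Q}}=1$ for every $x\in V^{\mathbb{Q}}$, which is the only ingredient not immediate from the definitions: by well-founded induction, assuming $\ldbrack y=y\rdbrack_{\mathbb{Q}}=1$ for all $y\in dom(x)$, the estimate $\ldbrack y\in x\rdbrack_{\mathbb{Q}}\geq x(y)\cdot\ldbrack y=y\rdbrack_{\mathbb{Q}}=x(y)$ gives $x(y)\leq\ldbrack y\in x\rdbrack_{\mathbb{Q}}$, so $x(y)\rightarrow\ldbrack y\in x\rdbrack_{\mathbb{Q}}=1$ by Theorem \ref{cuantales1}(1); taking the meet yields $\ldbrack x\subseteq x\rdbrack_{\mathbb{Q}}=1$, whence $\ldbrack x=x\rdbrack_{\mathbb{Q}}=1$.

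For $\ldbrack g\subseteq f\rdbrack_{\mathbb{Q}}=\bigwedge_{x\in dom(g)}\big(g(x)\rightarrow\ldbrack x\in f\rdbrack_{\mathbb{Q}}\big)$ I fix $x\in dom(g)$; since $dom(g)\subseteq dom(f)$ and $f\restriction_{dom(g)}=g$, we have $x\in dom(f)$ and $f(x)=g(x)$. Selecting the $w=x$ term in the supremum defining $\ldbrack x\in f\rdbrack_{\mathbb{Q}}$ gives $\ldbrack x\in f\rdbrack_{\mathbb{Q}}\geq f(x)\cdot\ldbrack x=x\rdbrack_{\mathbb{Q}}=g(x)$ by reflexivity, so $g(x)\leq\ldbrack x\in f\rdbrack_{\mathbb{Q}}$ and therefore $g(x)\rightarrow\ldbrack x\in f\rdbrack_{\mathbb{Q}}=1$ by Theorem \ref{cuantales1}(1). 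Every term of the meet being $1$, the meet itself is $1$.

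For $\ldbrack f\subseteq g\rdbrack_{\mathbb{Q}}=\bigwedge_{x\in dom(f)}\big(f(x)\rightarrow\ldbrack x\in g\rdbrack_{\mathbb{Q}}\big)$ I would split the domain. If $x\in dom(g)$, the same computation (with the roles of $f$ and $g$ interchanged, again using $f(x)=g(x)$) shows $f(x)\leq\ldbrack x\in g\rdbrack_{\mathbb{Q}}$, so the term is $1$. If $x\in dom(f)\setminus dom(g)$, the hypothesis gives $f(x)=0$, whence $f(x)\rightarrow\ldbrack x\in g\rdbrack_{\mathbb{Q}}=0\rightarrow\ldbrack x\in g\rdbrack_{\mathbb{Q}}=1$ by Theorem \ref{cuantales1}(5). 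Thus every term is $1$ and $\ldbrack f\subseteq g\rdbrack_{\mathbb{Q}}=1$; combining with the previous paragraph gives $\ldbrack f=g\rdbrack_{\mathbb{Q}}=1\cdot 1=1$, i.e. $V^{\mathbb{Q}}\models f=g$. The argument is essentially bookkeeping, and the step I expect to require the most care is the reflexivity lemma, since it is precisely what makes the common part of the domain behave correctly; the new part is then handled trivially by the identity $0\rightarrow y=1$, which is where the hypothesis $f(a)=0$ is used. Note that $\mathbb{L}^{\mathbb{Q}}$-membership of $f,g$ is not actually needed, so the statement in fact holds for arbitrary $f,g\in V^{\mathbb{Q}}$.
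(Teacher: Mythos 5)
Your proof is correct and complete. The paper itself states Lemma \ref{lemaextensionigualdad} without proof (deferring to \cite{Moncayo2023}), so there is no argument to compare against, but yours is the natural one: the reduction of $\ldbrack f=g\rdbrack_{\mathbb{Q}}=1$ to both inclusion factors being $1$ via Theorem \ref{cuantales1}(7) and integrality is sound, the reflexivity fact $\ldbrack x=x\rdbrack_{\mathbb{Q}}=1$ (the only non-trivial ingredient, proved by well-founded induction exactly as in the Boolean-valued setting of \cite{Bell2005}) is handled correctly, and the case split on $dom(f)$ using $0\rightarrow y=1$ is exactly where the hypothesis $f(a)=0$ enters. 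Your closing observation that the hypothesis $f,g\in\mathbb{L}^{\mathbb{Q}}$ is never used, so the lemma holds for arbitrary $f,g\in V^{\mathbb{Q}}$, is also correct.
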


The following theorem shows us that, for all Boolean algebras $\mathbb{Q}$, $L$ is ``isomorphic'' to $\mathbb{L}^{\mathbb{Q}}$ in the following way:

\begin{Teo}[\cite{Moncayo2023}]
    There exists a class function $j:L\rightarrow \mathbb{L}^{\mathbb{Q}}$ such that for all  $\alpha\in ON$, the restriction $j\restriction_{L_{\alpha}}: L_{\alpha}\rightarrow \mathbb{L}^{\mathbb{Q}}$ satisfies
    \begin{enumerate}

        \item $ran(j\restriction_{L_{\alpha}})\subseteq \mathbb{L}_{\alpha}^{\mathbb{Q}}$.
        \item $j\restriction_{L_{\alpha}}$ is injective.         

        \item $j\restriction_{L_{\alpha}}$ is surjective in the following sense: for all  $Y\in \mathbb{L}_{\alpha}^{\mathbb{Q}}$, there exists $X\in L_{\alpha}$ such that
        \begin{center}
            $\mathbb{L}_{\alpha}^{\mathbb{Q}}\models j(X)=Y$.
        \end{center}
        \item $j\restriction_{L_{\alpha}}$ is an elementary embedding in the following sense: for every $\mathcal{L}_{\in}$-formula $\varphi(\bar{x})$ and $\bar{a}\in L_{\alpha}^{|\bar{x}|}$, 
        \begin{center}
            $L_{\alpha}\models \varphi(\bar{a})$ if and only if $\ldbrack\varphi(j(\bar{a}))\rdbrack_{\mathbb{L}_{\alpha}^{\mathbb{Q}}}=1$.
        \end{center}
    \end{enumerate}
\end{Teo}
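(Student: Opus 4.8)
The plan is to define $j$ by transfinite recursion on the levels $L_\alpha$ and to verify the four clauses simultaneously by induction on $\alpha$. At a successor stage, a genuinely new set $X\in L_{\alpha+1}\setminus L_\alpha$ has the form $X=\{z\in L_\alpha : L_\alpha\models\varphi(z,\bar a)\}$ for some $\mathcal{L}_{\in}$-formula $\varphi$ and parameters $\bar a\in L_\alpha^{|\bar y|}$, and I would set $j(X)$ to be the element of $V^{\mathbb{Q}}$ with domain $\mathbb{L}_\alpha^{\mathbb{Q}}$ given by $j(X)(b):=\ldbrack\varphi(b,j(\bar a))\rdbrack_{\mathbb{L}_\alpha^{\mathbb{Q}}}$, while leaving $j(X)$ as the object already defined at an earlier stage whenever $X\in L_\alpha$. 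Since $\mathbb{L}_{\alpha+1}^{\mathbb{Q}}=Def^{\mathbb{Q}}(\mathbb{L}_\alpha^{\mathbb{Q}})\cup\mathbb{L}_\alpha^{\mathbb{Q}}$ by Definition \ref{mathbbL}, this places $j(X)$ directly in $\mathbb{L}_{\alpha+1}^{\mathbb{Q}}$, which is clause (1); the cumulativity of both hierarchies also makes the limit stages mere coherent unions, so the real work concentrates at successors.

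The first thing to settle is that $j(X)$ does not depend on the chosen representation $(\varphi,\bar a)$. Granting the inductive hypotheses at level $\alpha$ (injectivity, surjectivity and elementarity of $j\restriction_{L_\alpha}$) together with the two-valuedness of $\mathbb{L}^{\mathbb{Q}}$ established by the Remark following Theorem \ref{L^Bistwovalued}, I would argue as follows: for $b\in\mathbb{L}_\alpha^{\mathbb{Q}}$ pick, by surjectivity at level $\alpha$, some $z\in L_\alpha$ with $\ldbrack b=j(z)\rdbrack=1$; then the level-$\alpha$ version of Lemma \ref{lemaigualdad} lets me replace $b$ by $j(z)$, and level-$\alpha$ elementarity turns $\ldbrack\varphi(j(z),j(\bar a))\rdbrack$ into the truth value of $z\in X$, which is intrinsic to $X$, so any two representations yield the same function. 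The same computation gives the \emph{membership correspondence}: for $z\in L_\alpha$ and $X\in L_{\alpha+1}$ one has $z\in X$ iff $\ldbrack j(z)\in j(X)\rdbrack=1$. Two-valuedness is essential here, since $\ldbrack j(z)\in j(X)\rdbrack=\bigvee_{w}j(X)(w)\cdot\ldbrack w=j(z)\rdbrack$ collapses: the term $w=j(z)$ contributes $j(X)(j(z))$, and for any other $w$ with $\ldbrack w=j(z)\rdbrack=1$ Lemma \ref{lemaigualdad} forces $j(X)(w)=j(X)(j(z))$, so the join equals $j(X)(j(z))$, which is $1$ exactly when $z\in X$.

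From the membership correspondence the atomic clauses follow. Since $L_\alpha$ is transitive, $X\in Y$ with $Y\in L_{\alpha+1}$ forces $X\in L_\alpha$, reducing membership to the previous paragraph; for equality, if $X\neq Y$ then extensionality produces a $z$ separating them, and Lemma \ref{lemaigualdad} applied to $\ldbrack j(z)\in\cdot\rdbrack$ prevents $\ldbrack j(X)=j(Y)\rdbrack=1$, while atomic values are absolute between $\mathbb{L}_\alpha^{\mathbb{Q}}$ and $\mathbb{L}_{\alpha+1}^{\mathbb{Q}}$ because only quantifiers are relativized. I would then prove elementarity by induction on formula complexity: the Boolean and negation steps are immediate, because on $\{0,1\}$ the operations $\cdot,\land,\lor,\rightarrow,\sim$ coincide with the classical connectives (Theorem \ref{cerraduraBooleana} and the subsequent corollary), and the quantifier step matches $\bigvee_{c\in\mathbb{L}_{\alpha+1}^{\mathbb{Q}}}\ldbrack\theta(c)\rdbrack$ with $\bigvee_{W\in L_{\alpha+1}}\ldbrack\theta(j(W))\rdbrack$ through surjectivity at level $\alpha+1$ and Lemma \ref{lemaigualdad}. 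To break the apparent circularity I would establish surjectivity (clause 3) before the quantifier case: given $Y\in Def^{\mathbb{Q}}(\mathbb{L}_\alpha^{\mathbb{Q}})$ with $Y(b)=\ldbrack\varphi(b,\bar c)\rdbrack$, pull $\bar c$ back to $\bar a\in L_\alpha$ via surjectivity at level $\alpha$, put $X:=\{z\in L_\alpha: L_\alpha\models\varphi(z,\bar a)\}\in L_{\alpha+1}$, and check $j(X)=Y$ (literally, again by Lemma \ref{lemaigualdad}); clause (2) then drops out of clause (4) since $\ldbrack f=f\rdbrack=1$ always.

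I expect the main obstacle to be the bookkeeping of this simultaneous induction rather than any single deep estimate: the delicate point is sequencing the clauses at stage $\alpha+1$ so that surjectivity is already available for the quantifier case of elementarity, and making sure the level-relative substitution principle (the form of Lemma \ref{lemaigualdad} with $\ldbrack\cdot\rdbrack_{\mathbb{L}_\alpha^{\mathbb{Q}}}$) together with the absoluteness of atomic values are in hand. The limit stage reduces to a union: coherence of the $j\restriction_{L_\alpha}$ is automatic since values are never revised, clauses (1)–(3) pass to the union directly, and clause (4) is proved by the same formula-induction as at successors, the atomic correspondence at a limit reducing to lower levels by transitivity and absoluteness. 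Lemma \ref{lemaextensionigualdad} is the tool I would keep in reserve for reconciling an object carried up from a lower level with a freshly defined representative of larger domain, should the domains fail to match verbatim. If a more conceptual argument were wanted, one could instead transport the whole classical construction of $L$ through the isomorphism $\hat{\cdot}:V\to V^2$ of Theorem \ref{functionhat} and identify its image with $\mathbb{L}^{\mathbb{Q}}$, but the explicit recursion above renders the four clauses most transparent.
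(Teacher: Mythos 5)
Your proposal follows essentially the same route as the paper: the same recursive definition of $j(X)(c)=\ldbrack\varphi(c,j(\bar b))\rdbrack_{\mathbb{L}_\alpha^{\mathbb{Q}}}$, well-definedness via level-$\alpha$ surjectivity, Lemma~\ref{lemaigualdad} and level-$\alpha$ elementarity, surjectivity by pulling the parameters back and invoking Lemma~\ref{lemaextensionigualdad} when the preimage already occurs at a lower level, and elementarity by induction on formulas using the two-valuedness of $\mathbb{L}^{\mathbb{Q}}$. The only cosmetic difference is that you derive injectivity from the elementarity clause, whereas the paper proves it directly by a three-case analysis; both are sound.
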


\begin{proof}
    We prove this by induction on ordinals:
    The case $\alpha=0$ is trivial.
    
    Suppose that we have already defined $j\restriction_{L_{\beta}}$ and that it satisfies the conditions of the theorem for all  $\beta\leq \alpha$, where $\alpha$ is an ordinal. We define $j$ for $L_{\alpha+1}\setminus L_{\alpha}$: Given $X\in L_{\alpha+1}\setminus L_{\alpha}$, we have that $X\subseteq L_{\alpha}$ and that there exists a first-order $\mathcal{L}_{\in}$-formula $\varphi(x, \bar{y})$ and $\bar{b}\in L_{\alpha}^{|\bar{y}|}$ such that

    \begin{center}
        $X=\{a\in L_{\alpha}: L_{\alpha}\models \varphi(a, \bar{b})\}$.
    \end{center}
    
    We define
    
    \begin{center}
        $j(X):\mathbb{L}_{\alpha}^{\mathbb{Q}}\rightarrow \mathbb{Q}$ as $j(X)(c)=\ldbrack \varphi(c, j(\bar{b}))\rdbrack_{\mathbb{L}_{\alpha}^{\mathbb{Q}}}$ for all  $c\in \mathbb{L}_{\alpha}^{\mathbb{Q}}$
    \end{center}

    Notice that, from the definition, $j(X)\in \mathbb{L}_{\alpha+1}\setminus \mathbb{L}_{\alpha}^{\mathbb{Q}}$, and thus $ran(j\restriction_{ L_{\alpha+1}})\subseteq \mathbb{L}_{\alpha+1}$ and $rank_L(X)=rank_{\mathbb{L}^{\mathbb{Q}}}(j(X))$. 
    
    Let us see that the function $j(X)$ is indeed well-defined, that is, that the function does not depend on the choice of formulas and parameters.\\
    
    Consider some $\mathcal{L}_{\in}$-formulas $\varphi(x, \bar{y})$, $\psi(z, \bar{w})$ and parameters $\bar{b }\in L_{\alpha}^{|\bar{y}|}$ and $\bar{d}\in L_{\alpha}^{|\bar{w}|}$ such that

    \begin{center}
        $X=\{a\in L_{\alpha}: L_{\alpha}\models \varphi(a, \bar{b})\}=\{a\in L_{\alpha}: L_{\alpha}\models \psi(a, \bar{d})\}$.
    \end{center}

    From the previous equality, we have that for all  $a\in L_{\alpha}$,
    
    \begin{center}
        $ L_{\alpha}\models \varphi(a, \bar{b})$ if and only if $L_{\alpha}\models \psi(a, \bar{d})$.
    \end{center}
    
    But using the induction hypothesis item $4.$, we have that
    
    \begin{center}
        $L_{\alpha}\models \varphi(a, \bar{b})$ if and only if $\ldbrack\varphi(j(a), j(\bar{b}))\rdbrack_{\mathbb{L}_{\alpha}^{\mathbb{Q}}}=1$ and \\
        $L_{\alpha}\models \psi(a, \bar{d})$ if and only if $\ldbrack\psi(j(a), j(\bar{d}))\rdbrack_{\mathbb{L}_{\alpha}^{\mathbb{Q}}}=1$
    \end{center}
    
    Combining these results, we get
    
    \begin{center}
        $\ldbrack\varphi(j(a), j(\bar{b}))\rdbrack_{\mathbb{L}_{\alpha}^{\mathbb{Q}}}=1$ if and only if $\ldbrack\psi(j(a), j(\bar{d}))\rdbrack_{\mathbb{L}_{\alpha}^{\mathbb{Q}}}=1$, namely,\\
        
        $\ldbrack\varphi(j(a), j(\bar{b}))\rdbrack_{\mathbb{L}_{\alpha}^{\mathbb{Q}}}=\ldbrack\psi(j(a), j(\bar{d}))\rdbrack_{\mathbb{L}_{\alpha}^{\mathbb{Q}}}$ for all  $a\in L_{\alpha}$
    \end{center}
    
    And thus the function $j(X)$ is well-defined for every $c\in j(\mathbb{L}_{\alpha}^{\mathbb{Q}})\subseteq \mathbb{L}_{\alpha}^{\mathbb{Q}}$.
    
    To see that this is also true for all  $c\in \mathbb{L}_{\alpha}^{\mathbb{Q}}$, and not only for all $j(a)\in j(L_{\alpha})$, we use Lemma \ref{lemaigualdad} and the ``surjectivity'' of $j$. Given $c\in \mathbb{L}_{\alpha}^{\mathbb{Q}}$, there exists $a_c\in L_{\alpha}$ such that $\ldbrack j(a_c)=c\rdbrack_{\mathbb{L}_{\alpha}^{\mathbb{Q}}}=1$. Then,
    
    \begin{center}
        $\ldbrack\varphi(j(a_c), j(\bar{b}))\rdbrack_{\mathbb{L}_{\alpha}^{\mathbb{Q}}}=1$ if and only if $\ldbrack\varphi(c, j(\bar{b}))\rdbrack_{\mathbb{L}_{\alpha}^{\mathbb{Q}}}=1$ and $\ldbrack\psi(j(a_c), j(\bar{d}))\rdbrack_{\mathbb{L}_{\alpha}^{\mathbb{Q}}}=1$ if and only if $\ldbrack\psi(c, j(\bar{d}))\rdbrack_{\mathbb{L}_{\alpha}^{\mathbb{Q}}}=1$
    \end{center}
    
    we conclude that
    
    \begin{center}
        $\ldbrack\varphi(c, j(\bar{b}))\rdbrack_{\mathbb{L}_{\alpha}^{\mathbb{Q}}}=1$ if and only if $\ldbrack\psi(c, j(\bar{d}))\rdbrack_{\mathbb{L}_{\alpha}^{\mathbb{Q}}}=1$, i.e.,  $\ldbrack\varphi(c, j(\bar{b}))\rdbrack_{\mathbb{L}_{\alpha}^{\mathbb{Q}}}=\ldbrack\psi(c, j(\bar{d}))\rdbrack_{\mathbb{L}_{\alpha}^{\mathbb{Q}}}$ for all $c\in \mathbb{L}_{\alpha}^{\mathbb{Q}}$. 
    \end{center}
    And therefore the function $j(X)$ is well-defined for $X\in\mathbb{L}_{\alpha+1}^{\mathbb{Q}}$.\\
    
    Let us see that $j\restriction_{\mathbb{L}_{\alpha}^{\mathbb{Q}}}$ is injective.\\
    
     If $X, Y\in L_{\alpha}$, then, by induction hypothesis item 2., $j(X)\not=j(Y)$ provided that $X\not=Y$.\\
     
    If $X\in L_{\alpha+1}$ and $Y\in L_{\alpha}$, then we have that there exists $\beta<\alpha$ such that $Y\in L_{\beta+1 }$ and therefore $dom(X)=\mathbb{L}_{\alpha}^{\mathbb{Q}}$ and $dom(Y)=\mathbb{L}_{\beta}^{\mathbb{Q}}$, so we have $j(X) \not=j(Y)$, since $dom(j(X))\not=dom(j(Y))$.\\

     Let $X, Y\in L_{\alpha+1}\setminus L_{\alpha}$ be such that $X\not=Y$. Thus, there are $\mathcal{L}_{\in}$-formulas $\varphi(x, \bar{y})$, $\psi(z, \bar{w})$ and parameters $\bar{ b}\in L_{\alpha}^{|\bar{x}|}$ and $\bar{d}\in L_{\alpha}^{|\bar{w}|}$ such that

    \begin{center}
        $X=\{a\in L_{\alpha}: L_{\alpha}\models \varphi(a, \bar{b})\}$ and $Y=\{a\in L_{\alpha}: L_{\alpha}\models \psi(a, \bar{d})\}$.
    \end{center}
    
    Since $X\not= Y$, we may assume, without loss of generality, that there exists $a\in L_{\alpha}$ such that $a\in X$ and $a\not\in Y$, i.e.
    
    \begin{center}
        $ L_{\alpha}\models \varphi(a, \bar{b})$ and $L_{\alpha}\nvDash \psi(a, \bar{d})$.
    \end{center}
    
    Then,
    
    \begin{center}
        $j(X)(j(a))=\ldbrack \varphi(j(a), j(\bar{b}))\rdbrack_{\mathbb{L}_{\alpha}^{\mathbb{Q}}}=1$ and $j(Y)(j(a))=\ldbrack \psi(j(a), j(\bar{d}))\rdbrack_{\mathbb{L}_{\alpha}^{\mathbb{Q}}}=0$
    \end{center}
    
    i.e., $j(X)\not=j(Y)$ and $j\restriction_{\mathbb{L}_{\alpha}^{\mathbb{Q}}}$ is injective.\\
    
    Let us now show that $j\restriction_{\mathbb{L}_{\alpha}^{\mathbb{Q}}}$ is surjective in the sense of item 3. of this theorem. Let $Y\in \mathbb{L}_{\alpha+1}^{\mathbb{Q}}\setminus\mathbb{L}_{\alpha}^{\mathbb{Q}}=Def^{\mathbb{Q}}(\mathbb{L}_{\alpha}^{\mathbb{Q}})$. Then, there is a $\mathcal{L}_{\in}$-formula $\psi(x, \bar{y})$ and parameters $\bar{d}\in L_{\alpha}^{|\bar{x}|}$ such that for all $c\in\mathbb{L}_{\alpha}^{\mathbb{Q}}=dom(Y)$
    
    \begin{center} 
    $Y(c)=\ldbrack \psi(c, \bar{d})\rdbrack_{\mathbb{L}_{\alpha}^{\mathbb{Q}}}$.
    \end{center}
    
    By the induction hypothesis item $3.$, there exists $\bar{b}\in L_{\alpha}^{|\bar{y}|}$ such that $\mathbb{L}_{\alpha}^{\mathbb{Q}}\models j( \bar{b})=\bar{d}$. Let us define
    
    \begin{center}
        $X:=\{a\in L_{\alpha}: L_{\alpha}\models \psi(a, \bar{b})\}\in L_{\alpha+1}$.
    \end{center}
    
    We then have two cases:
    
    \begin{enumerate}
        \item Suppose that $X\in L_{\alpha}$, then, we have that there is an ordinal $\beta<\alpha$ such that $X\in L_{\beta+1}$. Thus, we have that $X\subseteq L_{\beta}$ and that there is a first-order $\mathcal{L}_{\in}$-formula $\varphi(w, \bar{z})$ and parameters $\bar {f}\in L_{\alpha}^{|\bar{z}|}$ such that
        
        \begin{center}
            $X=\{e\in L_{\beta}: L_{\beta}\models \varphi(e, \bar{f})\}=\{a\in L_{\alpha}: L_{\alpha}\models \psi(a, \bar{b})\}$
        \end{center}
        
        Thus, since $L_{\beta}\subseteq L_{\alpha}$, we have that for all $e\in L_{\beta}$,
        
        \begin{center}
            $L_{\beta}\models \varphi(e, \bar{f})$ if and only if $L_{\alpha}\models \psi(e, \bar{b})$
        \end{center}
        
        by the induction hypothesis item 4., we get
        
        \begin{center}
            $\ldbrack \varphi(j(e), j(\bar{f}))\rdbrack_{\mathbb{L}_{\beta}^{\mathbb{Q}}}=\ldbrack \psi(j(e), j(\bar{b}))\rdbrack_{\mathbb{L}_{\alpha}^{\mathbb{Q}}}=\ldbrack \psi(j(e), \bar{d})\rdbrack_{\mathbb{L}_{\alpha}^{\mathbb{Q}}}$ for all $e\in L_{\beta}$
        \end{center}
        
        By using the ``surjectivity'' of $j$, we can generalize this to all $g\in dom(j(X))=\mathbb{L}_{\beta}^{\mathbb{Q}}$, obtaining
        
        \begin{center}
            $j(X)(g)=\ldbrack \varphi(g, j(\bar{f}))\rdbrack_{\mathbb{L}_{\beta}^{\mathbb{Q}}}=\ldbrack \psi(g, \bar{d})\rdbrack_{\mathbb{L}_{\alpha}^{\mathbb{Q}}}=Y(g)$ for all $g\in dom(j(X))$
        \end{center}
        
        In this way, we conclude that $Y$ is an extension of $j(X)$. Let us see how $Y$ behaves in $\mathbb{L}_{\alpha}^{\mathbb{Q}}\setminus\mathbb{L}_{\beta}^{\mathbb{Q}}$. Given $a\in L_{\alpha}\setminus L_{\beta}$, since $a\not \in L_{\beta}$ and $X\subseteq L_{\beta}$, we have $a\not \in X$ and therefore 
        
        \begin{center}
            $L_{\alpha}\nvDash \psi(e, \bar{b})$, i.e., $\ldbrack \psi(j(e), j(\bar{b}))\rdbrack_{\mathbb{L}_{\alpha}^{\mathbb{Q}}}=\ldbrack \psi(j(e),\bar{d})\rdbrack_{\mathbb{L}_{\alpha}^{\mathbb{Q}}}=0.$
        \end{center}
        
        By the ``surjectivity'' of $j$, we can generalize the equality given above to
        
        \begin{center}
            $j(Y)=\ldbrack \psi(g,\bar{d})\rdbrack_{\mathbb{L}_{\alpha}^{\mathbb{Q}}}=0$ for all $g\in \mathbb{L}_{\alpha}^{\mathbb{Q}}\setminus\mathbb{L}_{\beta}^{\mathbb{Q}}$.
        \end{center}
        
        In this way, since $Y$ is an extension of $j(X)$ such that for all $g\in dom(Y)\setminus dom(j(X))$, $Y(g)=0$, we conclude that the functions $j(X)$ and $Y$ are equal in the sense of $\ldbrack \cdot=\cdot\rdbrack$ by Lemma \ref{lemaextensionigualdad}, i.e.,
        
        \begin{center}
            $\mathbb{L}_{\alpha}^{\mathbb{Q}}\models j(X)=Y$
        \end{center}
        as we wanted.

        \item Suppose $X\in L_{\alpha+1}\setminus L_{\alpha}$. In this case, we have $j(X)=Y$ since $dom(j(X))=\mathbb{L}_{\alpha}^{\mathbb{Q}}=dom(Y)$ and for all $c\in\mathbb{L}_{\alpha}^{\mathbb{Q}}$,
    
    \begin{center}
        $j(X)(c)=\ldbrack \psi(c, j(\bar{b}))\rdbrack_{\mathbb{L}_{\alpha}^{\mathbb{Q}}}=\ldbrack \psi(c, \bar{d})\rdbrack_{\mathbb{L}_{\alpha}^{\mathbb{Q}}}=Y(c)$, i.e., $j(X)=Y$
    \end{center}
    
    \end{enumerate}
    
    Let us see that we have the property $4$ by induction on formulas:
    
    \begin{enumerate}
        \item $\in$: Let $X, Y\in L_{\alpha+1}$, we see that
        \begin{center}
            $L_{\alpha+1}\models X\in Y$ if and only if $\mathbb{L}_{\alpha+1}^{\mathbb{Q}}\models j(X)\in j(Y)$.
        \end{center}
        
        $(\Rightarrow)$ Suppose $L_{\alpha+1}\models X\in Y$. Thus, since $X\in Y\subseteq L_{\alpha}$, we have $X\in L_{\alpha}$. Consider an $\mathcal{L}_{\in}$-formula $\psi(x, \bar{y})$ and parameters $\bar{b}\in L_{\alpha}^{|\bar{x}|}$ such that 
        
        \begin{center}
            $Y=\{a\in L_{\alpha}: L_{\alpha}\models \psi(a, \bar{b})\}$
        \end{center}
        
        Then, since $X\in Y$, we have $L_{\alpha}\models \psi(X, \bar{b})$ and therefore, by the induction hypothesis, $\mathbb{L}_{\alpha}^{\mathbb{Q}}\models \psi(j(X), j(\bar{b}))$, i.e.,
        
        \begin{center}
            $j(Y)(j(X))=\ldbrack \psi(j(X, j(\bar{b}))\rdbrack_{\mathbb{L}_{\alpha}^{\mathbb{Q}}}=1$
        \end{center}
        
        And since $\ldbrack j(X)=j(X)\rdbrack_{\mathbb{L}_{\alpha}^{\mathbb{Q}}}=1$, we have $j(Y)(j(X))\land \ldbrack j(X)=j(X)\rdbrack_{\mathbb{L}_{\alpha}^{\mathbb{Q}}}=1$ and since $j(X) \in dom(Y)\subseteq \mathbb{L}_{\alpha}^{\mathbb{Q}}$, we have
        
        \begin{center}
            $\ldbrack j(X)\in j(Y)\rdbrack_{\mathbb{L}_{\alpha}^{\mathbb{Q}}}=\bigvee\limits_{c\in dom(j(Y))}j(Y)(c)\land \ldbrack c=j(X)\rdbrack_{\mathbb{L}_{\alpha}^{\mathbb{Q}}}=1$
        \end{center}
        
        as desired.
        
        $(\Leftarrow)$ Let us suppose that
        
        \begin{center}
            $\ldbrack j(X)\in j(Y)\rdbrack_{\mathbb{L}_{\alpha}^{\mathbb{Q}}}=\bigvee\limits_{c\in dom(j(Y))}j(Y)(c)\land \ldbrack c=j(X)\rdbrack_{\mathbb{L}_{\alpha}^{\mathbb{Q}}}=1$
        \end{center}

       Thus, since $j(Y)(c)\land \ldbrack c=j(X)\rdbrack_{\mathbb{L}_{\alpha}^{\mathbb{Q}}}$ can only be $0$ or $1$, there exists $c\in dom(j(Y))\subseteq \mathbb{L}_{\alpha}^{\mathbb{Q}}$ such that
        
        \begin{center}
            $j(Y)(c)\land \ldbrack c=j(X)\rdbrack_{\mathbb{L}_{\alpha}^{\mathbb{Q}}}=1$,
        \end{center}
        then, 
        \begin{center}
            $j(Y)(c)=1$ and $\ldbrack c=j(X)\rdbrack_{\mathbb{L}_{\alpha}^{\mathbb{Q}}}=1$
        \end{center}
    
        Since $j(Y)(c)=\ldbrack \psi(c, j(\bar{b}))\rdbrack_{\mathbb{L}_{\alpha}^{\mathbb{Q}}}=1$ and $\ldbrack c=j (X)\rdbrack_{\mathbb{L}_{\alpha}^{\mathbb{Q}}}=1$, we have that
        
        \begin{center}
            $j(Y)(j(X))=\ldbrack \psi(j(X), j(\bar{b}))\rdbrack_{\mathbb{L}_{\alpha}^{\mathbb{Q}}}=1$.
        \end{center}
        By induction hypothesis, we have that
        
        \begin{center}
            $L_{\alpha}\models\psi(X, \bar{b})$
        \end{center}
        
        and then $X\in Y$.

        \item Equality: Let $X, Y\in L_{\alpha+1}\setminus L_{\alpha}$. Let us see that
         \begin{center}
             $L_{\alpha+1}\models X= Y$ if and only if $\mathbb{L}_{\alpha+1}^{\mathbb{Q}}\models j(X)=j(Y)$.
         \end{center}

         By definition, we have that there exist first-order $\mathcal{L}_{\in}$-formulas $\varphi(x, \bar{y})$, $\psi(z, \bar{w})$ and parameters $\bar{ b}\in L_{\alpha}^{|\bar{x}|}$ and $\bar{d}\in L_{\alpha}^{|\bar{w}|}$ such that

    \begin{center}
        $X=\{a\in L_{\alpha}: L_{\alpha}\models \varphi(a, \bar{b})\}$ and $Y=\{a\in L_{\alpha}: L_{\alpha}\models \psi(a, \bar{d})\}$.
    \end{center}
        
        If $X=Y$, then, since $j$ is a function, $j(X)=j(Y)$ and therefore $\mathbb{L}_{\alpha+1}^{\mathbb{Q}}\models j(X)= j(Y)$.
        
         Suppose now that $X\not=Y$. Without loss of generality, suppose there exists $a\in X$ with $a\not\in Y$, i.e.,
        
        \begin{center}
            $L_{\alpha}\models \varphi(a, \bar{b})$ and $L_{\alpha}\nvDash \psi(a, \bar{d})$
        \end{center}
       By the induction hypothesis item 4., this means that
        
        \begin{center}
            $j(X)(j(a))=\ldbrack \varphi(j(a), j(\bar{b}))\rdbrack_{\mathbb{L}_{\alpha}^{\mathbb{Q}}}=1$ and $j(Y)(j(a))=\ldbrack \psi(j(a), j(\bar{d}))\rdbrack_{\mathbb{L}_{\alpha}^{\mathbb{Q}}}\not=1$.
        \end{center}
        but since $\ldbrack \psi(j(a), j(\bar{d}))\rdbrack_{\mathbb{L}_{\alpha}^{\mathbb{Q}}}$ can only take values in $\{0, 1\}$, we get
        \begin{center}
            $j(Y)(j(a))=\ldbrack \psi(j(a), j(\bar{d}))\rdbrack_{\mathbb{L}_{\alpha}^{\mathbb{Q}}}=0.$
        \end{center}
        
        Let us see that $\ldbrack j(a)\in j(Y)\rdbrack_{\mathbb{L}_{\alpha}^{\mathbb{Q}}}=0$.
        
        Notice that
        
        \begin{center}
            $j(X)(j(a))=1$ and $\ldbrack j(a)\in j(Y)\rdbrack_{\mathbb{L}_{\alpha}^{\mathbb{Q}}}=0$, i.e., $j(X)(j(a))\rightarrow\ldbrack j(a)\in j(Y)\rdbrack_{\mathbb{L}_{\alpha}^{\mathbb{Q}}}=0$
        \end{center}
        
        and therefore
        
        \begin{center}
            $\ldbrack j(X)\subseteq j(Y)\rdbrack_{\mathbb{L}_{\alpha}^{\mathbb{Q}}}=\bigwedge\limits_{c\in dom(j(X))}j(X)(c)\rightarrow\ldbrack c\in j(Y)\rdbrack_{\mathbb{L}_{\alpha}^{\mathbb{Q}}}=0$
        \end{center}
        and we have $\ldbrack j(X)=j(Y)\rdbrack_{\mathbb{L}_{\alpha}^{\mathbb{Q}}}=0$, as desired.
    \end{enumerate}
    The rest of the induction is straightforward, and therefore, by induction on formulas, we have the theorem.
    
    \end{proof}

\bibliographystyle{amsalpha}

\end{document}